\documentclass[11pt,a4paper]{amsart}

\usepackage{amscd,amssymb,amsopn,amsmath,amsthm,graphics,amsfonts,mathrsfs,accents,enumerate,verbatim,calc}
\usepackage{amssymb}
\usepackage{enumitem}
\usepackage[dvips]{graphicx}
\usepackage[colorlinks=true,linkcolor=red,citecolor=blue]{hyperref}
\usepackage{latexsym}
\usepackage{mathrsfs}
\usepackage{newtxmath}
\usepackage{newclude}
\usepackage{syntonly}
\usepackage{tikz}		
\usepackage{tikz-cd}	
\usepackage[all]{xy}
\usepackage{nicematrix}
\usepackage{arydshln} 	
\usepackage{silence}
\date{}
\hypersetup
{colorlinks=true,linkcolor=black}
\allowdisplaybreaks[4] \footskip=15pt
\renewcommand{\uppercasenonmath}[1]{}

\numberwithin{equation}{section} \theoremstyle{plain}
\newtheorem{lemm}{Lemma}[section]
\newtheorem{coro}[lemm]{Corollary}
\newtheorem{prop}[lemm]{Proposition}
\newtheorem{theo}[lemm]{Theorem}

\newtheorem{defn}[lemm]{Definition}
\newtheorem{exam}[lemm]{Example}

\definecolor{morpink}{RGB}{239,229,248}
\definecolor{morblue}{RGB}{146,172,209}
\definecolor{defcolor}{RGB}{50,205,50}
\definecolor{tcolor}{RGB}{210,105,30} 
\definecolor{lcolor}{RGB}{210,105,30} 
\definecolor{pcolor}{RGB}{253,249,238} 

\def\no{\noindent}

\def\mf{R}
\def\mr{\mathbb{R}}
\def\ca{A}
\def\cb{B}
\def\cc{C}

\def\lr{R}
\def\cs{B}

\def\ra{\rightarrow}

\def\proof{\no\rmm{Proof:}}
\def\endd{$\hfill\Box$}  				

\newcommand{\rmm}[1]{
	{\rm #1}
}

\newcommand{\tela}[1]{
	\tilde{#1}
}

\newcommand{\Rmnum}[1]{\uppercase\expandafter{\romannumeral#1}}

\newlist{rlist}{enumerate}{1}
\setlist[rlist]{label=\roman*\rmm{)}}

\begin{document}
\begin{center}
{\Large  \bf  Frobenius extensions about centralizer matrix algebras}

\vspace{0.5cm} Qikai Wang and Haiyan Zhu\footnote{Corresponding author

\ Supported by the National Natural Science Foundation of China  (12271481).}  \\
School of Mathematical Science, Zhejiang University of Technology, Hangzhou 310023, China

E-mails: qkwang@zjut.edu.cn and hyzhu@zjut.edu.cn
\end{center}

\bigskip
\centerline { \bf  Abstract}
\leftskip10true mm \rightskip10true mm \noindent

This paper investigates the conditions under which the centralizer algebra $S_n(c,R)$  of a matrix \( c\in M_n(R)\) is a (separable) Frobenius extension of the base algebra $R$. 
For an algebra $R$ over an integral domain $\mathbb{k}$, we provide necessary and sufficient conditions for $S_n(c,R)/R$ to be a (separable) Frobenius extension when \(c\) is in Jordan canonical form with eigenvalues in \(\mathbb{k}\). 
We extend this analysis to arbitrary matrices over a field and derive conditions for matrix diagonalizability through Frobenius extensions.
\\[2mm]
{\bf Keywords:} Centralizer algebras, Frobenius extension, Separable,  Minimal matrix  \\
{\bf 2020 MSC:} 15A27, 16S50, 16U70

\leftskip0true mm \rightskip0true mm


\section {Introduction}

Let $\lr$ be a unitary ring and $n$ a natural number.
We set $[n]:=\{1,2,\dots, n\}$ and denote by $M_n(\lr)$ the ring of $n\!\times\! n$ matrices over $\lr$ and by $e_{i,j}$ the matrix units of $M_n(\lr)$. 

For a nonempty set $\cc$ of matrices in $M_n(\lr)$, we define the \emph{centralizer algebra} of $\cc$ by
\begin{center}
	$S_n(\cc,\lr):=\{a\in M_n(\lr)\mid  ac=ca$ for all $c\in\cc \}$.
\end{center}
In case $C=\{c\}$, we write $S_n(c,\lr)$ for $S_n(\{c\},\lr)$.

The study of centralizer matrix algebras  is one of important topics in the theory of matrices, which has significant applications not only in abstract studies, but also in applied problems  (see \cite{MR201472,MR85214,MR168568,xizhang2022}).
If $\cc$ consists of nilpotent matrices and $\lr$ is an algebraically closed field, then the variety consisting of nilpotent matrices in $S_n(\cc,\lr)$ is of great interest in understanding properties of semisimple Lie algebras (see \cite{MR2363491,MR2018787}). 
Typical examples of centralizer matrix algebras include centrosymmetric matrix algebras (see \cite{MR820054,xi1}), the Auslander algebras of the truncated polynomial algebras (see \cite{MR4241258}) and centralizer of a matrix  over an algebraically closed field. 
It is commonly known that centrosymmetric matrices have significant applications in Markov processes (see \cite{MR820054}), engineering problems and quantum physics (see \cite{MR998028}).

An algebra extension $\ca/\cs$ is called a Frobenius extension provided that $\ca$ is finitely generated projective as a right $\cs$-module and isomorphic to Hom$(\ca_\cs,\cs_\cs)$ as a $\cs$-$\ca$-bimodule.
They play a crucial role in various mathematical domains.
These include representation theory (see \cite{Xi-fbfd,xi1}), knot theory, and solutions to the Yang-Baxter equation (see \cite{MR1401512}). They also have significant applications in topological quantum field theories and the theory of codes (see \cite{Gnilke-etbwfrfb,Kock-fatqft}).

An interesting example of Frobenius extensions  related to centralizer matrix algebras, due to Xi and Zhang (see \cite{MR4241258}), is that the extension $M_n(\lr)/ S_n(c,\lr)$ is always a separable Frobenius extension for an arbitrary field $R$ and $c\in M_n(\lr)$.
Another interesting example of Frobenius extensions about matrix algebras is that $M_n(\lr)/\lr$ is always a Frobenius extension for any algebra $R$ (see \cite{Kanzaki-ngffe}).
However, it remains unknown if $S_n(c,\lr)/\lr$ is a Frobenius extension.
Here, the general question reads as follows.

\textit{Is $S_n(c,\lr)/\lr$ always a Frobenius extension?}

\begin{center}
	\begin{tikzcd}
		M_n(\lr) &  & {S_n(c,\lr)} \arrow[ll ] &  & \lr \arrow[ll, "?" description] \arrow[llll, bend right]
	\end{tikzcd}
\end{center}

The main purpose of this note is to provide an answer  to the above question. To state our results precisely, we first introduce some notation and definitions.

Recall that  a Jordan block, denoted by $J_n(\lambda) $, is an $n \times n$ matrix with its diagonal filled with the eigenvalue $\lambda$ and ones on the superdiagonal, and zeros elsewhere.
Let $J_n$ denote the $n \times n$ Jordan block with eigenvalue zero.
A matrix $c$ is \emph{Jordan-similar} to $a$ if $c = u^{-1}au$ for some invertible matrix $u$, where $a$ is in Jordan canonical form. This form is characterized by having Jordan blocks along the diagonal, with each block corresponding to an eigenvalue of $c$.

Since every unitary ring $R$ is an algebra over the ring of integers $\mathbb{Z}$, we adopt the term `algebra' instead of `ring' throughout this paper.
Our  main result reads as follows.

\begin{theo}\label{main2}
	Let $\lr$ be an algebra over an integral domain $\vmathbb{k}$ and $c=\bigoplus J_{n_i}(\lambda_i)$  a Jordan-block matrix in $M_n(R)$
    with eigenvalues $\lambda_i \in   \vmathbb{k}  $.
	Then
	\begin{rlist}
		\item $S_n(c,\lr)/\lr$ is a Frobenius extension if and only if  $n_i = n_j$ whenever $\lambda_i = \lambda_j$ for any pair $i$ and $j$.
		\item
		If every element of $[n]$ is a unit in $R$, then  $S_n(c,\lr)/\lr$ is a separable Frobenius extension if and only if  $n_i = 1$ for any $i$.
	\end{rlist}
\end{theo}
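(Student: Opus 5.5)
Our plan is to reduce everything to an explicit description of $S_n(c,\lr)$ as a free $\lr$-module with an explicit basis and multiplication, and then test the Frobenius property via a Frobenius form.

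\emph{Step 1: structure of the centralizer.} Group the Jordan blocks by eigenvalue. Since the $\lambda_i$ lie in $\vmathbb{k}$ and are central, a matrix $a$ commutes with $c$ iff its block $a_{ij}$ satisfies $J_{n_i}(\lambda_i)a_{ij}=a_{ij}J_{n_j}(\lambda_j)$, i.e.
\[
(\lambda_i-\lambda_j)a_{ij}=a_{ij}J_{n_j}-J_{n_i}a_{ij}.
\]
Iterating this relation $n_i+n_j$ times gives $(\lambda_i-\lambda_j)^{N}a_{ij}=0$. We need $\vmathbb{k}$ to be an integral domain, but also that $\lambda_i-\lambda_j$ is a non-zero-divisor on $\lr$; if needed we will use that $S_n(c,-)$ is compatible with this torsion-freeness, or work with the explicit recursion, which shows the entries are determined inductively and vanish. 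Hence $S_n(c,\lr)=\prod_{\lambda}S(\lambda)$, where each factor is the centralizer of a nilpotent $\bigoplus_{\lambda_i=\lambda} J_{n_i}$. The classical description then applies: block $a_{ij}$ is an upper-triangular Toeplitz matrix, aligned at the top right or bottom left depending on whether $n_i\le n_j$, with $\min(n_i,n_j)$ free parameters. Thus $S_n(c,\lr)$ is free over $\lr$, with $\lr$ central, and $S_n(c,\lr)\cong S_n(c,\vmathbb{k})\otimes_{\vmathbb{k}}\lr$. Frobenius extensions are compatible with finite products, so we may treat each eigenvalue separately.

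\emph{Step 2: sufficiency in (i).} If all $n_i=n_j=m$ for a fixed eigenvalue, then $S(\lambda)\cong M_k(\lr[x]/(x^m))$. This is Frobenius over $\lr$: the extension $\lr[x]/(x^m)$ over $\lr$ has the form ``coefficient of $x^{m-1}$'', and matrix algebras preserve Frobenius extensions by composing with the trace. So the condition is sufficient.

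\emph{Step 3: necessity in (i).} This is the main obstacle. We must show that if some eigenvalue has blocks of unequal sizes, say $p<q$, then no $\lr$-linear $E:S\to\lr$ makes $S\to \mathrm{Hom}_\lr(S,\lr)$ an isomorphism. The first attempt is to construct a nonzero element of $S$ killed by everything in a one-sided sense. An alternative is to compare ranks after base change: a Frobenius extension over a free base stays Frobenius after tensoring $-\otimes_{\vmathbb{k}}F$ for a residue field or fraction field $F$ of $\vmathbb{k}$, or after reducing $\lr$ modulo a maximal ideal. This reduces the problem to showing that the centralizer of $J_p\oplus J_q$ over a field is not a Frobenius algebra. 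For that, compare the socle of the left regular module with the top: the nilpotent-plus-idempotent structure gives two primitive idempotents $e_p,e_q$, and we will show that $\mathrm{soc}(Se_p)$ and $\mathrm{soc}(Se_q)$ are not in Nakayama bijection. In fact the projective $Se_q$ has socle of top type $q$ while $Se_p$ also has socle of type $q$, so the algebra is not even weakly symmetric or self-injective. We would verify this by explicit computation with the Toeplitz basis, for instance of the map $J_q^{q-1}$-type elements. The general case with several sizes reduces to the pair of the smallest and largest sizes by the same socle computation.

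\emph{Step 4: (ii).} Separability of $S/\lr$ forces $S\otimes_\lr$ (a residue field) to be separable, hence semisimple, so there are no nilpotent radical elements. But $J_{n_i}\in S$ gives a nonzero radical element when $n_i>1$; hence $n_i=1$. Conversely, if all $n_i=1$, then $S\cong\prod M_{k}(\lr)$. With the invertibility of the $k\le n$ assumed, $M_k(\lr)/\lr$ is separable, for instance using the separability idempotent $\frac1k\sum_{i,j}e_{ij}\otimes e_{ji}$. It is also Frobenius by Step 2.
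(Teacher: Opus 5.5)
Your necessity arguments take a different route from the paper's, and their core is sound. Over a field $F$, take $A=S(J_p\oplus J_q,F)$ with $p<q$. The socle of $Ae_p$ is spanned by $J^{p-1}_{q,p}$ in block $(2,1)$, and the socle of $Ae_q$ by $J_q^{q-1}$; both are of the type of the larger block. With several sizes, every $Ae_s$ has in its socle a simple of the type of the largest block, namely the element $J^{n_s-1}_{N,n_s}$ placed in a block row of maximal size $N$. So $A$ is not self-injective. Likewise, a separable algebra over a field is semisimple, which handles (ii).

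\textbf{The reduction to a field is the gap.} Tensoring over $\vmathbb{k}$ with a residue or fraction field $F$ of $\vmathbb{k}$ gives $R\otimes_{\vmathbb{k}}F$, which is not a field and can even be $0$ (e.g.\ $R=\mathbb{F}_p$, $\vmathbb{k}=\mathbb{Z}$, $F=\mathbb{Q}$). Reducing modulo a maximal ideal gives a field only when $R$ is commutative. The theorem allows noncommutative $R$, and then $R/\mathfrak{m}$ is merely a simple ring, so your socle and semisimplicity arguments do not apply as stated.

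The repair is short:
\begin{enumerate}[label=(\arabic*)]
\item Write $S=S_0\otimes_{\mathbb{Z}}R$, where $S_0$ has the $0/1$ Toeplitz basis $b_1,\dots,b_N$; these basis elements commute with $R$.
\item Since $E$ is $R$-bilinear, $E(b_u)\in Z(R)$. Hence the Gram matrix $G=(E(b_ub_v))$ lies in $M_N(Z(R))$.
\item Write ${}_tX=\sum_u b_u x_{t,u}$ and ${}_tY=\sum_v b_v y_{t,v}$, and set $P=(\sum_t x_{t,u}y_{t,v})$. The two Frobenius identities say exactly $GP=I=PG$.
\item $G$ commutes with all scalar matrices, so $P=G^{-1}$ does too. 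Hence $G\in GL_N(Z(R))$, and you may reduce modulo a maximal ideal of the commutative ring $Z(R)\neq 0$.
\item A separability element $d$ lies in $C_S(R)=S_0\otimes Z(R)$ and satisfies $\sum_{u,v}P_{uv}b_udb_v=1$, so it reduces as well. Alternatively, for (ii) use the paper's computation $\sum_i J_n^i d J_n^{n-1-i}=n\delta_0J_n^{n-1}$, which needs no base change.
\end{enumerate}

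\textbf{Step 1 caveat and comparison with the paper.} Your caveat in Step 1 is justified, but the ``explicit recursion'' cannot remove it. The recursion starts from $(\lambda_i-\lambda_j)a_{m,1}=0$, which gives nothing when $\lambda_i-\lambda_j$ annihilates an element of $R$. In fact the statement fails without such a hypothesis. Take $\vmathbb{k}=\mathbb{Z}$, $R=\mathbb{F}_2$ and $c=J_1(0)\oplus J_2(2)$. Over $\mathbb{F}_2$ this $c$ is $J_1\oplus J_2$, whose centralizer is the non-self-injective algebra above, even though $0\neq 2$ in $\vmathbb{k}$. The paper uses this assumption tacitly through Lemma~\ref{block}(ii). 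You should simply assume that $\lambda_i-\lambda_j$ is a non-zero-divisor on $R$ whenever $\lambda_i\neq\lambda_j$.

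For comparison, the paper argues in coordinates over $R$. It applies $\sum_t E(A\,{}_tX)\,{}_tY=A$ to the matrices $A$ supported in the last block column with entries $\alpha^iJ^{n_k-1}_{n_i,n_k}$. These are precisely socle elements, of the type of the largest block, of the right projectives. From this it obtains a factorization of $\mathrm{diag}(\alpha^1,\dots,\alpha^k)$ through $k-k'<k$ coordinates. So both proofs detect the same phenomenon. Yours is more conceptual and shows the centralizer is not even self-injective, at the cost of a base-change step; the paper's avoids base change.

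For (ii), your semisimplicity argument shows that necessity does not need $[n]\subseteq R^{\times}$. Sufficiency does not need it either: $\sum_i e_{i,1}\otimes e_{1,i}$ is a separability idempotent for $M_k(R)/R$ in every characteristic. Equivalently, $d=e_{1,1}\in C_{M_k(R)}(R)$ works, so the ``only if'' in Lemma~\ref{RI,Mn} is too strong.
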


For a field $R$, an $R$-algebra $A$ is a (separable) Frobenius $R$-algebra if $A/R$ is a (separable) Frobenius extension.
Additionally, we denote by  $\text{char}(R)$ the characteristic of the field $\mf$ and   $\overline{\mf}$ the algebraic closure of $\mf$.
Inspired by the method of Xi and Zhang in \cite{MR4461655}, 
we obtain the following corollary.

\begin{coro}\label{coro0}
	Let $\lr$ be a field and	$c$   a  matrix   in $M_n(\lr)$.
	Assume that $c$ is similar to $\bigoplus_i J_{n_i}(\lambda_i)$ in $M_n(\overline{R})$ with $\lambda_i\in\overline{R}$.
	Then
    \begin{rlist}
        \item $S_n(c,\lr)$ is a Frobenius $R$-algebra if and only if  $n_i = n_j$ for $\lambda_i = \lambda_j$.
        \item $S_n(c,\lr)$ is a separable Frobenius  $R$-algebra if and only if  $n_i = 1$ for any $i$.
    \end{rlist}
\end{coro}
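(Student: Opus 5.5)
We reduce Corollary~\ref{coro0} to Theorem~\ref{main2} by base change to $\overline{\mf}$ and then to a Jordan form. Both properties descend along field extensions.

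\emph{Base change.} Let $A=S_n(c,\mf)$. The centralizer is the kernel of the $\mf$-linear map $a\mapsto ac-ca$, and kernels commute with the flat extension $\overline{\mf}\otimes_\mf-$. Hence
\[
\overline{\mf}\otimes_\mf A\cong S_n(c,\overline{\mf}).
\]
For a finite-dimensional algebra $A$ over a field $\mf$ and a field extension $L/\mf$, we use two standard facts (this is the idea we borrow from Xi and Zhang \cite{MR4461655}):
\begin{itemize}
\item $A$ is Frobenius over $\mf$ if and only if $L\otimes_\mf A$ is Frobenius over $L$;
\item $A$ is separable over $\mf$ if and only if $L\otimes_\mf A$ is separable over $L$.
\end{itemize}
For Frobenius, the forward direction is clear: a nondegenerate form $\varphi$ extends to $1\otimes\varphi$. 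For the converse, note that $A\cong DA$ as right $A$-modules if and only if $L\otimes A\cong L\otimes DA$. This follows from the Noether--Deuring theorem, or from the criterion via nondegeneracy of a generic linear form: nondegeneracy is a Zariski-open condition on $DA$, and $\mf$ is infinite when such openness is needed. For finite $\mf$ we instead use Noether--Deuring directly.

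For separability, $A$ is separable if and only if $A\otimes_\mf A^{op}$-projectivity of $A$ holds. This is preserved and reflected by faithfully flat base change, since $L\otimes A$ projective over $(L\otimes A)^e$ means the multiplication map splits, and splitting descends by taking an $\mf$-linear retraction $L\to\mf$.

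\emph{Conjugation.} Over $\overline{\mf}$, write $c=u^{-1}au$ with $a=\bigoplus J_{n_i}(\lambda_i)$. Then $x\mapsto uxu^{-1}$ is an $\overline{\mf}$-algebra isomorphism $S_n(c,\overline{\mf})\cong S_n(a,\overline{\mf})$. Frobenius and separable-Frobenius extensions of $\overline{\mf}$ are invariant under such isomorphisms.

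\emph{Applying Theorem~\ref{main2}.} Take $\vmathbb{k}=\overline{\mf}=\lr$, which is an integral domain containing all $\lambda_i$.
\begin{itemize}
\item Part (i) of Theorem~\ref{main2} gives (i) of the corollary immediately.
\item For (ii), Theorem~\ref{main2}(ii) requires $1,\dots,n$ to be units in $\overline{\mf}$, which fails when $0<\mathrm{char}(\mf)\le n$. So this case needs a separate treatment.
\end{itemize}

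\emph{Part (ii) in small characteristic.} If all $n_i=1$, then $a$ is diagonal and $S_n(a,\overline{\mf})\cong\prod_\lambda M_{m_\lambda}(\overline{\mf})$. This is semisimple over an algebraically closed field, hence separable and Frobenius. Conversely, suppose some $n_i\ge2$. Then $S_n(a,\overline{\mf})$ contains a nonzero nilpotent central element, namely the nilpotent part of $a$. A separable algebra over $\overline{\mf}$ is semisimple, so it is a product of matrix algebras with reduced center. This is a contradiction, and it holds in every characteristic.

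\emph{Main obstacle.} The main difficulty is the descent of the Frobenius property from $\overline{\mf}$ to $\mf$. The extension $\overline{\mf}/\mf$ is infinite, so we use that $\overline{\mf}\otimes_\mf A$ and $\overline{\mf}\otimes_\mf DA$ are isomorphic as modules over $\overline{\mf}\otimes_\mf A$. Such an isomorphism is already defined over some finite subextension $L$. Then $L\otimes A\cong L\otimes DA$ as $A$-modules gives $A^{[L:\mf]}\cong (DA)^{[L:\mf]}$, and Krull--Schmidt yields $A\cong DA$.
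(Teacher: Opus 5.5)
Your argument is correct. For (i) it follows the paper's route: extend scalars to a field containing the eigenvalues, conjugate to Jordan form, and apply Theorem~\ref{main2}(i). There are two minor differences:
\begin{itemize}
\item You pass to $\overline{R}$ and descend the Frobenius property through a finite subextension using Krull--Schmidt. The paper works directly over the finite splitting field $K$ and quotes Xi--Zhang for this step.
\item You descend separability by applying a retraction $L\to R$ to a separability idempotent. The paper's Lemma~\ref{tensor} extracts one basis coordinate of $K$, which is the same idea.
\end{itemize}

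For (ii) you genuinely diverge, and your route is the better one. The paper simply invokes Theorem~\ref{main2}(ii). That theorem assumes $1,\dots,n$ are units, which fails when $0<\text{char}(R)\leqslant n$. So the paper's proof of (ii) only covers characteristic $0$ or $>n$.

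Your semisimplicity argument works in every characteristic, so it proves the corollary as stated:
\begin{itemize}
\item If $c$ is diagonalizable, you get $\prod_\lambda M_{m_\lambda}(\overline{R})$.
\item A block of size $\geqslant 2$ gives a nonzero central nilpotent element, namely the nilpotent part of $a$, which is a polynomial in $a$. Hence the radical is nonzero.
\end{itemize}

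Your claim that $M_m(\overline{R})$ is separable even when $\text{char}(R)$ divides $m$ contradicts the ``only if'' half of Lemma~\ref{RI,Mn}. You are right and that lemma is wrong. Its proof only tries $d\in R$, but $d$ may range over $C_{M_m(R)}(R)$, and $d=e_{1,1}$ already gives $\sum_{i,j}e_{i,j}e_{1,1}e_{j,i}=I$.

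The paper's approach yields explicit Frobenius systems over an arbitrary integral domain. Yours gives a characteristic-free proof of (ii), at the cost of relying on structure theory that is specific to finite-dimensional algebras over fields.
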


Our research provides a way to determine whether the extension $S_n(c,R)/R$ is Frobenius or not.
This allows us to present numerous simple examples of non-Frobenius extensions.
Additionally, Corollary \ref{coro0} naturally gives rise to following corollary.

\begin{coro}\label{coro1}
	Let $\lr$ be an algebraically closed field.
    Then a matrix $c\in M_n(\lr)$ is diagonalizable  if and only if $S_n(c,\lr)$ is a separable Frobenius  algebra.
\end{coro}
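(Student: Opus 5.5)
Corollary~\ref{coro1} is a direct consequence of Corollary~\ref{coro0}(ii), so the plan is to combine that result with the Jordan canonical form over an algebraically closed field.

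\textbf{Step 1.} Since $\lr$ is algebraically closed, we have $\overline{\lr}=\lr$. Hence every $c\in M_n(\lr)$ is similar in $M_n(\lr)$ to a Jordan matrix $\bigoplus_i J_{n_i}(\lambda_i)$ with $\lambda_i\in\lr$. The hypothesis of Corollary~\ref{coro0} is therefore satisfied with this decomposition.

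\textbf{Step 2.} By Corollary~\ref{coro0}(ii), $S_n(c,\lr)$ is a separable Frobenius $\lr$-algebra if and only if $n_i=1$ for every $i$.

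\textbf{Step 3.} The condition $n_i=1$ for all $i$ is exactly the statement that the Jordan form of $c$ is diagonal. By uniqueness of the Jordan canonical form up to permutation of blocks, this holds if and only if $c$ is diagonalizable.
\begin{itemize}
\item If $c=u^{-1}du$ with $d$ diagonal, then $d$ is itself a Jordan form of $c$ with all blocks of size one.
\item Conversely, if all $n_i=1$, then $c$ is similar to the diagonal matrix $\mathrm{diag}(\lambda_1,\dots,\lambda_n)$.
\end{itemize}

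None of these steps is a genuine obstacle. The only care needed is in Step 3: we must use that the block sizes $n_i$ are invariants of $c$, independent of the chosen similarity. This is what allows the condition ``$n_i=1$ for any $i$'' in Corollary~\ref{coro0} to be read as a property of $c$ itself. Note also that the characteristic restriction appearing in Theorem~\ref{main2}(ii), that elements of $[n]$ be units, is not assumed in Corollary~\ref{coro0}. So no assumption on $\mathrm{char}(\lr)$ needs to be imposed here.
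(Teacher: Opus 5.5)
Your argument is correct and follows the paper's own route: Corollary~\ref{coro1} is read off from Corollary~\ref{coro0}(ii), using that over an algebraically closed field the Jordan form lives in $M_n(R)$ and its block sizes are similarity invariants. One caution on your closing remark: the paper derives Corollary~\ref{coro0}(ii) from Theorem~\ref{main2}(ii), whose unit hypothesis fails when $0<\mathrm{char}(R)\leqslant n$, and Lemma~\ref{RI,Mn} even asserts that $M_p(R)/R$ is not separable when $\mathrm{char}(R)=p$ (which would contradict Corollary~\ref{coro1} for $c=0\in M_p(R)$), so the paper's argument does not actually support the characteristic-free reading; the statement is nonetheless true in every characteristic, because in the separability criterion $d$ may range over all of $C_{M_m(R)}(R)=M_m(R)$ rather than just $R$, and $d=e_{1,1}$ gives $\sum_{i,j}e_{i,j}\,d\,e_{j,i}=I$.
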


Furthermore, minimal matrices are also an important concept in centralizer algebras.
A  nonscalar matrix $c$ is  \emph{minimal}, if for every $a\in M_n(\mf)$ with $S_n(a,\mf)\subseteq S_n(c,\mf)$ it follows that $S_n(a,\mf)=S_n(c,\mf)$.
By Corollary \ref{coro0} and \cite[Proposition 2.3]{MR3018047}, we have the following corollary.
\begin{coro}\label{coro4}
	Let  $\mf$ be a  field with   $\text{char}(R) \geqslant n$ $(n\geqslant 2)$.
	If  $c\in M_n(R)$ is minimal, then $S_n(c,R)$ is a Frobenius algebra.
\end{coro}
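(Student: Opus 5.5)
We will deduce Corollary \ref{coro4} from Corollary \ref{coro0} together with the description of minimal matrices in \cite[Proposition 2.3]{MR3018047}. That description says that a nonscalar $c\in M_n(\mf)$ is minimal exactly when its centralizer is as large as possible among nonscalar matrices. Equivalently, after extending scalars to $\overline{\mf}$, $c$ is similar to $\lambda I + \mu e$, where $e$ is either an idempotent or a square-zero matrix. In Jordan terms, over $\overline{\mf}$ the matrix $c$ is similar either to a diagonal matrix with exactly two distinct eigenvalues, or to $\lambda I+N$ with $N^2=0$ and $N\neq 0$.

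The first step is to check that minimality passes to the form needed in Corollary \ref{coro0}. Since $S_n(c,\mf)\otimes_{\mf}\overline{\mf}=S_n(c,\overline{\mf})$, and since the characterization in \cite{MR3018047} is stated in terms of the shape of $c$ (its minimal polynomial having degree $2$, with either distinct roots or a repeated root), we obtain a Jordan decomposition $c\sim\bigoplus_i J_{n_i}(\lambda_i)$ over $\overline{\mf}$ of one of two kinds:
\begin{enumerate}[label=(\alph*)]
\item all $n_i=1$, with eigenvalues $\lambda,\lambda'$;
\item all $\lambda_i=\lambda$ and every $n_i\in\{1,2\}$, with at least one $n_i=2$.
\end{enumerate}
The hypothesis $\text{char}(\mf)\geqslant n$ enters exactly where \cite[Proposition 2.3]{MR3018047} needs it: it excludes pathological minimal matrices in small characteristic, for instance those arising from inseparable minimal polynomials. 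So we simply verify that our hypothesis matches the one there.

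The second step applies Corollary \ref{coro0}(i). In case (a) every block has size $1$, so the condition ``$n_i=n_j$ whenever $\lambda_i=\lambda_j$'' holds trivially, and $S_n(c,\mf)$ is Frobenius (even separable, by part (ii)). Case (b) is the main obstacle. Here all eigenvalues coincide, so Corollary \ref{coro0}(i) requires every $n_i=2$, which means $n$ is even and $N$ has rank $n/2$. A square-zero $N$ of smaller rank has blocks of both sizes $1$ and $2$, and then $S_n(c,\mf)$ is not Frobenius. So we must show that minimality, perhaps combined with $\text{char}(\mf)\geqslant n$, rules out mixed block sizes.

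We expect this to depend on the exact statement of \cite[Proposition 2.3]{MR3018047}. We would first re-read it to see whether it characterizes minimal matrices (over fields of sufficiently large characteristic) as those whose minimal polynomial is irreducible or a product of distinct linear factors, i.e.\ as those that are semisimple over $\overline{\mf}$. That would exclude case (b) entirely and leave only case (a), or its Galois-twisted analogue with two conjugate eigenvalues and all $n_i=1$, where Corollary \ref{coro0} immediately gives the Frobenius property. If instead case (b) with mixed block sizes genuinely occurs, we would test the smallest example, $n=3$ with $c=J_2\oplus J_1$, against the definition of minimality directly. This would settle which reading of the cited proposition is correct before finishing the argument.
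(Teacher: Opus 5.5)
\textbf{There is a genuine gap: you have reversed the definition of minimality.} In the paper, $c$ is minimal when $S_n(c,R)$ is minimal under inclusion among centralizers, so the centralizer is as \emph{small} as possible. The class you describe ($c\sim\lambda I+\mu e$ with $e^2=e$ or $e^2=0$, i.e.\ minimal polynomial of degree $2$) is the class of matrices with \emph{maximal} centralizers.

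Under your reading the corollary would be false. Take $n=3$ and $c=J_2\oplus J_1$. Its centralizer is maximal among nonscalar ones, since any $x$ with $S_3(c,R)\subseteq S_3(x,R)$ lies in $R[c]=\mathrm{span}\{I,c\}$. Yet Corollary \ref{coro0}(i) says $S_3(c,R)$ is not Frobenius. This is exactly the obstruction in your case (b), and no choice of hypothesis removes it. Your fallback guess, that minimal means semisimple over $\overline{R}$, is also wrong: $J_n$ is minimal but not semisimple.

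\textbf{The missing idea is that minimal means nonderogatory.} If $c\in R[a]$, then $S_n(a,R)\subseteq S_n(c,R)$. Moreover $\dim S_n(a,R)\geq n$, with equality exactly when $a$ is nonderogatory. Hence every nonderogatory $c$ is minimal. Conversely, a derogatory $c$ is not minimal: give the different Jordan blocks of a common eigenvalue different eigenvalues to produce a nonderogatory $a$ with $c\in R[a]$. This needs enough scalars in $R$, which is the role of the field hypothesis and of \cite[Proposition 2.3]{MR3018047}.

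This also settles the test you proposed. We have $J_2\oplus J_1\sim J_3^2$ and $S_3(J_3,R)\subsetneq S_3(J_3^2,R)$, of dimensions $3$ and $5$, so your test case is not minimal. Likewise $\mathrm{diag}(\lambda,\lambda,\lambda')$ from your case (a) is a polynomial in $J_2(\lambda)\oplus J_1(\lambda')$, so it is not minimal either.

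Once $c$ is nonderogatory, its minimal and characteristic polynomials coincide. This persists over $\overline{R}$, so $c\sim\bigoplus_i J_{n_i}(\lambda_i)$ with pairwise distinct $\lambda_i$. The condition in Corollary \ref{coro0}(i) then holds vacuously, and $S_n(c,R)$ is Frobenius, which is the paper's one-line argument. You can even bypass Corollary \ref{coro0}, since $S_n(c,R)=R[c]\cong R[t]/(\chi_c(t))$ is directly Frobenius.
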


This article is outlined as follows. Section \ref{pre} is dedicated to some foundational concepts of  Frobenius extensions. Section \ref{mainsec} presents the proof of Theorem \ref{main2}. In addition,  Section \ref{mainsec} also gives corollaries and examples which contribute to showing the main result.

\section{Preliminaries}\label{pre}

In this section we discuss basic properties of separable Frobenius extensions and centralizer matrix algebras.
Throughout the paper, $\lr$ denotes an algebra over an integral domain $\vmathbb{k}$.
In addition, we denote by  $M_{m\times n}(\lr)$ the set of all $m \times n$ matrices over $\lr$ and by $e_{i,j}$ the matrix units of $M_{m\times n}(\lr)$ with $i \in [m], j \in [n]$. We write $I_n$ for the identity matrix in $M_n(\lr)$. For a matrix $c \in M_{m\times n}(\lr)$, we denote by   $tr(c)$ the trace of $c$.

First, let us recall some facts about separable Frobenius extensions.

\begin{lemm}\cite{newexam}
	The following are equivalent:
    \begin{rlist}
        \item  $\ca/\cs$ is a Frobenius extension
        \item There exists $E\in \rmm{Hom}_{\cs-\cs}(\ca,\cs)$, $X_i, Y_i\in \ca$ such that $\forall a\in \ca$
        \begin{center}
            $\sum_{i} X_i E(Y_i a) =a$, and $\sum_{i} E(aX_i)Y_i =a$.
        \end{center}
    \end{rlist}
\end{lemm}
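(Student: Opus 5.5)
The plan is to prove the two implications of the lemma separately. The link between them is the canonical map
\[
\Phi\colon \ca\to \rmm{Hom}(\ca_\cs,\cs_\cs),\qquad \Phi(a)=E(a\,\cdot\,),
\]
for a fixed $E\in \rmm{Hom}_{\cs-\cs}(\ca,\cs)$.

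\emph{From ii) to i).} Assume $E$, $X_i$, $Y_i$ are as in ii).

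First, $\ca_\cs$ is finitely generated projective. The first identity $\sum_i X_iE(Y_ia)=a$ says that the elements $X_i\in\ca$ and the functionals $E(Y_i\,\cdot\,)\in \rmm{Hom}(\ca_\cs,\cs_\cs)$ form a finite dual basis. Each $E(Y_i\,\cdot\,)$ is right $\cs$-linear because $E$ is. The dual basis lemma then gives that $\ca_\cs$ is finitely generated projective.

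Next, $\Phi$ is a $\cs$-$\ca$-bimodule map. Here $\cs$ acts on $\rmm{Hom}(\ca_\cs,\cs_\cs)$ by $(bf)(x)=bf(x)$ and $\ca$ acts by $(fa)(x)=f(ax)$. Then $\Phi(ba)=b\Phi(a)$ by left $\cs$-linearity of $E$, and $\Phi(aa')=\Phi(a)a'$ holds trivially.

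To see that $\Phi$ is injective, suppose $E(ax)=0$ for all $x$. Then the second identity gives $a=\sum_iE(aX_i)Y_i=0$.

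To see that $\Phi$ is surjective, take $f\in\rmm{Hom}(\ca_\cs,\cs_\cs)$ and set $a=\sum_i f(X_i)Y_i$. Then
\[
E(ax)=\sum_i f(X_i)E(Y_ix)=f\Bigl(\sum_iX_iE(Y_ix)\Bigr)=f(x).
\]
The first equality uses left $\cs$-linearity of $E$ and the second uses right $\cs$-linearity of $f$. Hence $\Phi$ is an isomorphism and $\ca/\cs$ is Frobenius.

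\emph{From i) to ii).} Fix a $\cs$-$\ca$-bimodule isomorphism $\Phi\colon\ca\to\rmm{Hom}(\ca_\cs,\cs_\cs)$ and put $E=\Phi(1)$.

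By construction $E$ is right $\cs$-linear. It is also left $\cs$-linear, since
\[
E(bx)=\Phi(1)(bx)=(\Phi(1)b)(x)=\Phi(b)(x)=(b\Phi(1))(x)=bE(x),
\]
using in turn the right $\ca$-action, right $\ca$-linearity of $\Phi$, and left $\cs$-linearity of $\Phi$. The same computation gives $\Phi(a)=E(a\,\cdot\,)$ for every $a$.

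Since $\ca_\cs$ is finitely generated projective, the dual basis lemma gives $X_i\in\ca$ and $f_i\in\rmm{Hom}(\ca_\cs,\cs_\cs)$ with $a=\sum_iX_if_i(a)$ for all $a$. Surjectivity of $\Phi$ lets us write $f_i=E(Y_i\,\cdot\,)$ for some $Y_i\in\ca$, and this yields the first identity.

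For the second identity, fix $a$ and compare $\Phi\bigl(\sum_iE(aX_i)Y_i\bigr)$ with $\Phi(a)$ on an arbitrary $x$. Using the first identity for $x$ and right $\cs$-linearity of $E$,
\[
\sum_iE(aX_i)E(Y_ix)=E\Bigl(a\sum_iX_iE(Y_ix)\Bigr)=E(ax).
\]
So the two elements have the same image, and injectivity of $\Phi$ gives $\sum_iE(aX_i)Y_i=a$.

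The argument is routine throughout. The point needing the most care is keeping the side conventions for the bimodule structure on $\rmm{Hom}(\ca_\cs,\cs_\cs)$ consistent. In particular, the second identity is not a separate input: it is obtained from injectivity of $\Phi$ together with the first identity.
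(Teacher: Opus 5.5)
Your proof is correct: both implications check out, including the left $B$-linearity of $E=\Phi(1)$ and the derivation of the second identity from the first identity plus injectivity of $\Phi$. The paper gives no proof of this lemma and only cites Kadison's book; your argument, namely the dual basis lemma for $A_B$ combined with the map $a\mapsto E(a\,\cdot\,)$, is the standard proof of this characterization found there.
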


$(E,X_i,Y_i)$ is called a Frobenius system.

\begin{lemm}\cite{newexam}
	Suppose that $\ca/\cs$ is a Frobenius extension with system $(E,X_i,Y_i)$.
	Then $\ca$ is a separable extension of $\cs$ if and only if there exists $d\in C_\ca(\cs):=\{a\in\ca \mid  ab=ba$ for all $b\in\cs \}$ such that $\sum_i X_i d Y_i= 1$.

\end{lemm}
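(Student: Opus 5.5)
The plan is to use the standard characterization of separability: $\ca/\cs$ is separable exactly when there is a separability element $e=\sum_j a_j\otimes b_j\in \ca\otimes_\cs\ca$ with $ae=ea$ for all $a\in\ca$ and $\sum_j a_jb_j=1$. The idea is to show that, for a Frobenius extension, such elements correspond to elements $d\in C_\ca(\cs)$ via
\[
d\longmapsto \sum_i X_i d\otimes Y_i,\qquad \sum_j a_j\otimes b_j\longmapsto \sum_j E(a_j)b_j .
\]

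\textbf{Step 1: $\sum_i X_i\otimes Y_i$ is $\ca$-central.} I first show that $\sum_i X_i\otimes Y_i$ commutes with every $a\in\ca$ in $\ca\otimes_\cs\ca$. By the first Frobenius identity, $aX_i=\sum_k X_kE(Y_kaX_i)$. Since $E(Y_kaX_i)\in\cs$, it can be moved across the tensor sign, so
\[
\sum_i aX_i\otimes Y_i=\sum_k X_k\otimes \sum_i E(Y_kaX_i)Y_i .
\]
The second Frobenius identity, applied to $Y_ka$, turns the inner sum into $Y_ka$. This gives $\sum_i aX_i\otimes Y_i=\sum_k X_k\otimes Y_ka$.

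\textbf{Step 2: from $d$ to a separability element.} Let $d\in C_\ca(\cs)$ satisfy $\sum_iX_idY_i=1$. The map $x\otimes y\mapsto xd\otimes y$ is well defined on $\ca\otimes_\cs\ca$, because $d$ commutes with $\cs$. It is also an $\ca$-$\ca$-bimodule map. Hence it sends the central element of Step 1 to a central element $e=\sum_i X_id\otimes Y_i$. The product of the factors of $e$ is $\sum_iX_idY_i=1$, so $e$ is a separability element and $\ca/\cs$ is separable.

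\textbf{Step 3: from a separability element to $d$.} Conversely, let $e=\sum_j a_j\otimes b_j$ be a separability element, and set $d:=\sum_jE(a_j)b_j$. This is well defined because $x\otimes y\mapsto E(x)y$ is balanced over $\cs$, as $E$ is a $\cs$-$\cs$-homomorphism.
\begin{itemize}
\item For $b\in\cs$, apply this map to the identity $be=eb$; since $E(ba_j)=bE(a_j)$, this gives $bd=db$, so $d\in C_\ca(\cs)$.
\item To get $\sum_iX_idY_i=1$, work in $\ca\otimes_\cs\ca\otimes_\cs\ca$. Use centrality in the form $\sum_j a_j\otimes b_jY_i=\sum_j Y_ia_j\otimes b_j$. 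Tensor it on the left with $X_i$ and sum over $i$.
\item Apply $\mathrm{id}\otimes E\otimes\mathrm{id}$, which is well defined because $E$ is $\cs$-bilinear, and then multiply everything out. The left-hand side becomes $\sum_{i,j}X_iE(a_j)b_jY_i=\sum_iX_idY_i$.
\item The right-hand side becomes $\sum_j\bigl(\sum_iX_iE(Y_ia_j)\bigr)b_j=\sum_ja_jb_j=1$, by the first Frobenius identity.
\end{itemize}

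I expect the main obstacle to be the bookkeeping in this last computation. One has to check that every map used is balanced over $\cs$, and that centrality of $e$ is applied on the correct side, so that the Frobenius identity can absorb the sum over $i$. Once these well-definedness checks are recorded, the two implications are short.
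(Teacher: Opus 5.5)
Your proof is correct and complete. Step 1 is the usual centrality of the Casimir element $\sum_i X_i\otimes Y_i$. In Step 2, the map $x\otimes y\mapsto xd\otimes y$ is well defined and $A$-$A$-linear precisely because $d\in C_A(B)$. In Step 3, both $x\otimes y\mapsto E(x)y$ and $x\otimes y\otimes z\mapsto xE(y)z$ are $B$-balanced, so the evaluation $\sum_iX_idY_i=\sum_j\bigl(\sum_iX_iE(Y_ia_j)\bigr)b_j=\sum_ja_jb_j=1$ is legitimate.

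The paper gives no proof of its own here; the lemma is simply quoted from Kadison's book. Your argument is the standard one behind that citation, so there is nothing further to compare.

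One remark on framing only: the two maps in your opening display are not inverse to each other in general. On $A$-central elements, $d\mapsto\sum_iX_id\otimes Y_i$ is inverted by $\sum_ja_j\otimes b_j\mapsto\sum_ja_jE(b_j)$. By contrast, $\sum_ja_j\otimes b_j\mapsto\sum_jE(a_j)b_j$ inverts $d\mapsto\sum_iX_i\otimes dY_i$. For a non-symmetric Frobenius algebra over a field, these two correspondences differ by the Nakayama automorphism. This does not affect either implication. You only use each construction in one direction, and both $\sum_iX_id\otimes Y_i$ and $\sum_iX_i\otimes dY_i$ are sent by multiplication to the same element $\sum_iX_idY_i$.
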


Next we show the transitivity of (separable) Frobenius extensions.
\begin{prop}\label{composite}\cite{newexam}
	Let $\cb/\ca$ be a (separable) Frobenius extension, and let $\cc/\cb$ be a (separable) Frobenius extension. Then $\cc/\ca$ is a (separable) Frobenius extension.
\end{prop}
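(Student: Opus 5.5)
We prove transitivity directly from the characterization by Frobenius systems in the first lemma, and handle separability with the second lemma.

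\textbf{Frobenius part.} Let $(E,X_i,Y_i)$ be a Frobenius system for $\cb/\ca$ and $(F,U_j,V_j)$ one for $\cc/\cb$. Define
\[
G:=E\circ F\colon \cc\to\ca .
\]
Since $F$ is $\cb$-$\cb$-bilinear and $\ca\subseteq\cb$, it is in particular $\ca$-$\ca$-bilinear. Composing with the $\ca$-$\ca$-bilinear map $E$ shows $G\in\rmm{Hom}_{\ca-\ca}(\cc,\ca)$.

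As candidate dual bases we take the products $U_jX_i$ and $Y_iV_j$ (indexed by pairs $(i,j)$). For $c\in\cc$ we compute
\[
\sum_{i,j}U_jX_iE(F(Y_iV_jc))=\sum_j U_j\sum_i X_iE(Y_iF(V_jc)).
\]
Here we pull $Y_i\in\cb$ out of $F$ on the left. The first system identity applied to $F(V_jc)\in\cb$ turns the inner sum into $F(V_jc)$. The identity for $F$ then gives $\sum_j U_jF(V_jc)=c$.

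Symmetrically,
\[
\sum_{i,j}E(F(cU_jX_i))Y_iV_j=\sum_j\Big(\sum_i E(F(cU_j)X_i)Y_i\Big)V_j=\sum_j F(cU_j)V_j=c .
\]
So $(G,U_jX_i,Y_iV_j)$ is a Frobenius system, and $\cc/\ca$ is Frobenius by the first lemma.

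\textbf{Separable part.} Pick $d\in C_\cb(\ca)$ with $\sum_iX_idY_i=1$, and $d'\in C_\cc(\cb)$ with $\sum_jU_jd'V_j=1$. We need an element $D\in C_\cc(\ca)$ with
\[
\sum_{i,j}U_jX_iDY_iV_j=1 .
\]
The natural choice is $D:=d'd$. It commutes with $\ca$: $d'$ commutes with all of $\cb\supseteq\ca$, and $d$ commutes with $\ca$.

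The main obstacle is computing $\sum_iX_id'dY_i$. Since $X_i\in\cb$, the element $d'$ commutes with $X_i$, so
\[
X_id'dY_i=d'X_idY_i .
\]
Summing over $i$ gives $d'\cdot 1=d'$. Then $\sum_jU_jd'V_j=1$, as required, and $\cc/\ca$ is separable by the second lemma.

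Note that the order $D=d'd$ matters, because $d$ need not commute with elements of $\cc$. The inner sum over $i$ must be collapsed first, which is possible exactly because $d'$ centralizes $\cb$.
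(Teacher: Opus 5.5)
Your proof is correct. $(E\circ F, U_jX_i, Y_iV_j)$ is a Frobenius system for $\cc/\ca$, and $D=d'd\in C_\cc(\ca)$ gives $\sum_{i,j}U_jX_iDY_iV_j=1$ exactly as you compute. The paper gives no proof of its own here; it cites Kadison's book, whose standard transitivity argument is essentially this same composition of Frobenius systems and separability data.
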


\begin{lemm}\label{chai}
	 Let $B^i$ $(i\in [n])$ and $A$ be algebras.
     Then $\cb^i/\ca$ is a (separable) Frobenius extension  for any $i\in [n]$ if and only if  $\bigoplus_{i\in [n]}\cb^i/\ca$ is a (separable) Frobenius extension.
\end{lemm}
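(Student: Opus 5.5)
We use the characterization of Frobenius extensions by Frobenius systems from the first lemma of this section, and the separability criterion from the second lemma. Write $\cb:=\bigoplus_{i\in[n]}\cb^i$, with projections $\pi_i\colon \cb\to\cb^i$, inclusions $\iota_i\colon \cb^i\to\cb$, and central orthogonal idempotents $1_i:=\iota_i(1_{\cb^i})$ satisfying $\sum_i 1_i=1_\cb$. For each $i$, $\ca$ acts on $\cb^i$ through a unital homomorphism $\ca\to\cb^i$. On $\cb$, $\ca$ acts diagonally: $a\mapsto (a,\dots,a)$. Both directions consist of transporting Frobenius systems along $\iota_i,\pi_i$.

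\emph{Sufficiency.} Let $(E_i,X^i_k,Y^i_k)$ be a Frobenius system for $\cb^i/\ca$. Set $E:=\sum_i E_i\circ\pi_i$, and take as dual family all pairs $(\iota_i X^i_k,\iota_i Y^i_k)$ over all $i,k$. Each $\pi_i$ is $\ca$-$\ca$-bilinear, so $E$ is $\ca$-$\ca$-bilinear. For $b=(b_i)_i$ we have $(\iota_iY^i_k)b=\iota_i(Y^i_kb_i)$, hence
\[
\sum_{i,k}\iota_iX^i_k\,E\bigl(\iota_iY^i_k\,b\bigr)=\sum_i\iota_i\Bigl(\sum_kX^i_kE_i(Y^i_kb_i)\Bigr)=\sum_i\iota_i(b_i)=b .
\]
Here $\iota_iX^i_k\cdot a=\iota_i(X^i_ka)$ for $a\in\ca$, because $\iota_i(x)\cdot(a,\dots,a)=\iota_i(xa)$. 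The second identity is checked the same way. For separability, take $d_i\in C_{\cb^i}(\ca)$ with $\sum_kX^i_kd_iY^i_k=1$ and put $d:=\sum_i\iota_i d_i$. Then $d$ commutes with the diagonal copy of $\ca$, and
\[
\sum_{i,k}\iota_iX^i_k\,d\,\iota_iY^i_k=\sum_i 1_i=1 .
\]

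\emph{Necessity.} Let $(E,X_k,Y_k)$ be a Frobenius system for $\cb/\ca$. Define $E_i\colon\cb^i\to\ca$ by $E_i:=E\circ\iota_i$. It is $\ca$-$\ca$-bilinear because $\iota_i(a x a')=(a,\dots,a)\,\iota_i(x)\,(a',\dots,a')$ in $\cb$. Take $X^i_k:=\pi_iX_k$ and $Y^i_k:=\pi_iY_k$. For $x\in\cb^i$, apply the first identity to $\iota_i x$ and then apply $\pi_i$. Since $1_i$ is central idempotent, $Y_k\iota_i(x)=\iota_i(\pi_i(Y_k)x)$, and $\pi_i$ is $\ca$-linear, so
\[
x=\pi_i\Bigl(\sum_kX_kE(Y_k\iota_ix)\Bigr)=\sum_k\pi_i(X_k)\,E_i\bigl(\pi_i(Y_k)x\bigr).
\]
The other identity is symmetric. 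For separability, set $d_i:=\pi_i(d)$. It lies in $C_{\cb^i}(\ca)$, and applying the ring homomorphism $\pi_i$ to $\sum_kX_kdY_k=1$ gives $\sum_kX^i_kd_iY^i_k=1$.

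All steps are routine. The one point requiring care is the compatibility of the $\ca$-actions. The maps $\pi_i$ are unital ring maps, but the $\iota_i$ are not unital, so the identity $\iota_i(x)\cdot a=\iota_i(xa)$ must be verified directly; it follows from $(a,\dots,a)1_i=\iota_i(a)$. This same fact is what makes $E_i$ bilinear and $E$ well defined. We also assume the finite index set, so that finitely many dual pairs suffice and $\sum_i 1_i=1$.
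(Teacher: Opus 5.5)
Your proof is correct and follows the paper's route. For sufficiency you take $E=\sum_i E_i\circ\pi_i$, the dual pairs $(\iota_iX^i_k,\iota_iY^i_k)$ and $d=\sum_i\iota_i d_i$; for necessity you restrict via $E\circ\iota_i$, $\pi_iX_k$, $\pi_iY_k$ and $\pi_i(d)$, exactly as the paper does with $(X^1_i,0)$, $(0,X^2_i)$, $E(b^1,0)$ and $E(0,b^2)$. The differences are cosmetic: you treat all $n$ summands at once instead of reducing to $n=2$ by induction, and you verify explicitly that the diagonal $\ca$-action is compatible with the non-unital inclusions, which the paper leaves implicit.
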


\proof
We only need to prove the case of $n=2$, and it  follows easily by induction on $n$.

We first show that if $\cb^1/\ca$ and $\cb^2/\ca$ are Frobenius extensions, then $\cb^1 \bigoplus \cb^2/A$ is a Frobenius extension.
Assume that $(E^1,X^1_i,Y^1_i)$ and $(E^2,X^2_i,Y^2_i)$ are Frobenius systems of $\cb^1/\ca$ and $\cb^2/\ca$.
For any $(b^1,b^2)\in\cb^1\bigoplus \cb^2$, let
$E(b^1,b^2) = E(b^1,0)+E(0,  b^2)=E^1(b^1)+ E^2(b^2)$.
It is clear that  $E\in \rmm{Hom}_{\ca-\ca}(\cb^1\bigoplus \cb^2,\ca)$.
For any $(b^1,b^2)\in \cb^1\bigoplus \cb^2$, there is
\begin{align*}
	&\sum_i E [(b^1,b^2)(X^1_i,0)](Y^1_i,0)+\sum_i E [(b^1,b^2)(0,X^2_i)](0,Y^2_i)\\
	=&\sum_i E(b^1X^1_i,0)(Y^1_i,0)+\sum_i E(0,b^2X^2_i)(0,Y^2_i)\\
	=&\sum_i (E^1(b^1X^1_i),0)(Y^1_i,0)+\sum_i (0,E^2(b^2X^2_i))(0,Y^2_i)\\
	=&(\sum_iE^1(b^1X^1_i)Y^1_i,0)+(0,\sum_iE^2(b^2X^2_i)Y^2_i)\\
	=&(b^1,b^2).
\end{align*}
Similarly, we have $\sum_i (X^1_i,0)E [(Y^1_i,0)(b^1,b^2)] +\sum_i (0,X^2_i)E [(0,Y^2_i)(b^1,b^2)]=(b^1,b^2)$.

On the other hand, assume $\cb^1 \bigoplus \cb^2/A$ is a Frobenius extension.
Let $(E,(X^1_i,X^2_i),(Y^1_i,Y^2_i))$ be a Frobenius system  of $(\cb^1 \bigoplus \cb^2)/A$.
Let $E^1(b^1) = E(b^1, 0)$ and $E^2(b^2) = E(0, b^2)$, for $b^1\in \cb^1$ and $b^2\in\cb^2$.
Then it is easy to check $(E^1,X^1_i,Y^1_i)$ and $(E^2,X^2_i,Y^2_i)$ are Frobenius systems.

For the separable Frobenius extension, assume there exist $d^1\in \cb^1$ and $d^2\in \cb^2$ such that $\sum_i X_i^1 d^1 Y_i^1 =1$ and  $\sum_i X_i^2 d^2 Y_i^2 =1$.
Then $$\sum_i (X_i^1,0) (d^1,d^2) (Y_i^1,0)+\sum_i (0,X_i^2) (d^1,d^2) (0,Y_i^2) =  (\sum_iX_i^1d^1Y_i^1,\sum_iX_i^2d^2Y_i^2)=(1,1).$$

Similarly, there exists $d=(d^1,d^2)\in C_{\cb^1\bigoplus \cb^2}(\ca)$ such that $\sum_i (X^1_i,X^2_i)d (Y^1_i,Y^2_i)=(1,1)$. This implies that $\sum_i X_i^1 d^1 Y_i^1 =1$ and $\sum_i X_i^2 d^2 Y_i^2 =1$.
\endd

The direct sum of matrices is defined as follow.

\begin{defn}
	Given an $m \times n$ matrix $a$    and a $p \times q$ matrix   $b$, their direct sum, denoted by $a \bigoplus b$, is defined as a $(m+p) \times (n+q)$ matrix:
	$$ a \bigoplus b = \begin{bmatrix} a & 0 \\ 0 & b \end{bmatrix},$$
	where the zero matrices have appropriate sizes to fill the blocks, thereby making $a$ and $b$ the diagonal blocks of the resulting matrix.
\end{defn}

Secondly, let us recall some facts about centralizer matrix algebras.

\begin{lemm}\cite{MR4241258}
	Let  $c,a\in M_n(\lr)$ and $u\in GL_n(\lr)$ with $a = u^{-1}cu$. 
	Then, $S_n(a,\lr) = u^{-1}S_n(c,\lr)u:=\{ u^{-1}bu \mid  b\in S_n(c,\lr) \}$.
\end{lemm}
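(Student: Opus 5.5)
We argue directly from the definition of the centralizer. Since $u$ is invertible, conjugation $b\mapsto u^{-1}bu$ is a ring automorphism of $M_n(\lr)$ with inverse $x\mapsto uxu^{-1}$. The plan is to show the two inclusions $u^{-1}S_n(c,\lr)u\subseteq S_n(a,\lr)$ and $S_n(a,\lr)\subseteq u^{-1}S_n(c,\lr)u$ by transporting the commutation relation through this automorphism.

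First take $b\in S_n(c,\lr)$, so $bc=cb$. Using $a=u^{-1}cu$, compute
\begin{align*}
(u^{-1}bu)\,a &= u^{-1}bu\,u^{-1}cu = u^{-1}bcu\\
&= u^{-1}cbu = u^{-1}cu\,u^{-1}bu = a\,(u^{-1}bu).
\end{align*}
Hence $u^{-1}bu\in S_n(a,\lr)$.

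Conversely, take $x\in S_n(a,\lr)$ and set $b:=uxu^{-1}$, so that $x=u^{-1}bu$. Since $c=uau^{-1}$, the same calculation with the roles of $(c,u)$ and $(a,u^{-1})$ exchanged gives
\begin{equation*}
bc=uxu^{-1}uau^{-1}=uxau^{-1}=uaxu^{-1}=uau^{-1}uxu^{-1}=cb .
\end{equation*}
Thus $b\in S_n(c,\lr)$ and $x\in u^{-1}S_n(c,\lr)u$. The two inclusions give equality.

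There is no real obstacle here. The only care needed is that all manipulations use only associativity and $uu^{-1}=u^{-1}u=I_n$, so no commutativity of $\lr$ is required. This matters because $\lr$ is an arbitrary (possibly noncommutative) $\vmathbb{k}$-algebra. As a byproduct, the argument also shows that conjugation by $u$ restricts to an $\lr$-algebra isomorphism $S_n(c,\lr)\cong S_n(a,\lr)$, since scalar matrices in $\lr$ commute with $u$. This is what will later allow reducing Theorem \ref{main2} to a Jordan-form matrix.
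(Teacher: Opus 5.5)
Your two-inclusion verification is correct and is the standard argument; the paper gives no proof of its own and simply cites \cite{MR4241258} for this routine fact. One caution about your closing byproduct remark: for noncommutative $\lr$ a scalar matrix $rI_n$ commutes with $u$ only when $r$ commutes with every entry of $u$ (and $rI_n$ need not even lie in $S_n(a,\lr)$). So conjugation by $u$ is in general only a ring isomorphism $S_n(c,\lr)\cong S_n(a,\lr)$, not an $\lr$-algebra isomorphism. That is why the paper's transfer result, Lemma~\ref{similar}, assumes $\lr$ commutative, and it is applied in Corollary~\ref{coro0} over a field, not in Theorem~\ref{main2}, which is already stated for Jordan-block matrices.
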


Recall that $n\times n$ matrix $a$ is called \emph{semicirculant} if it has the form
\begin{center}
	$\begin{bmatrix}
		a_1&a_2&a_3&\dots&a_n\\
		0&a_1&a_2&\dots&a_{n-1}\\
		0&0&a_1&\dots&a_{n-2}\\
		\vdots&\vdots&\vdots&\ddots &\vdots\\
		0&0&0&\dots&a_1\\
	\end{bmatrix}\in M_n(R)$.
\end{center}
It should be noted that a semicirculant matrix is a special case of $S_n(c,R)$, where $c$ is the Jordan block $J_n$.

\begin{lemm}\cite{1986}\label{block}
	Let  $a=\sum_{i=1}^{m}\sum_{j=1}^n a_{i,j}e_{i,j}$ be an $m\times n$ matrix over $\lr$.
	Suppose $J_{m}(\lambda_1)a = a J_{n}(\lambda_2)$ with $\lambda_1,\lambda_2\in \vmathbb{k}$.
	\begin{rlist}
	\item If $\lambda_1 = \lambda_2$,
	then $a_{i,j}=a_{i+1,j+1}$, $a_{i+1,1}=0$ and $a_{m,j-1}=0$ for any $1\leqslant i\leqslant m-1$, $2\leqslant j\leqslant n$.
	\item If $\lambda_1 \neq \lambda_2$, then $a=0$.
	\end{rlist}
\end{lemm}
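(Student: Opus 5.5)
The plan is to compare the entries on both sides of the matrix identity $J_m(\lambda_1)a = aJ_n(\lambda_2)$. Write $J_m(\lambda_1)=\lambda_1 I_m + J_m$ and $J_n(\lambda_2)=\lambda_2 I_n+J_n$. The equation then becomes
\begin{equation*}
(\lambda_1-\lambda_2)a = aJ_n - J_m a .
\end{equation*}
Here $\lambda_1,\lambda_2\in\vmathbb{k}$ act on $\lr$ through the algebra structure. Since $J_m a$ shifts rows up and $aJ_n$ shifts columns right, the $(i,j)$ entry of this identity reads
\begin{equation*}
(\lambda_1-\lambda_2)a_{i,j} = a_{i,j-1} - a_{i+1,j},
\end{equation*}
with the conventions $a_{i,0}=0$ and $a_{m+1,j}=0$.

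For part (i), set $\lambda_1=\lambda_2$. The entrywise identity becomes $a_{i,j-1}=a_{i+1,j}$ for all $i\in[m]$ and $j\in[n]$.
\begin{itemize}
\item Taking $1\le i\le m-1$ and $2\le j\le n$ and reindexing gives $a_{i,j}=a_{i+1,j+1}$ in the stated range.
\item Taking $j=1$ gives $a_{i+1,1}=a_{i,0}=0$ for $i\le m-1$.
\item Taking $i=m$ gives $a_{m,j-1}=a_{m+1,j}=0$ for $2\le j\le n$.
\end{itemize}
This part is routine bookkeeping.

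For part (ii), set $\mu=\lambda_1-\lambda_2\neq 0$ in $\vmathbb{k}$. I will show $a=0$ by induction, starting from the bottom-left corner $(m,1)$. The induction runs on $j-i$, or equivalently on the order: first increasing $j$, then decreasing $i$.
\begin{itemize}
\item At $(m,1)$ the identity gives $\mu a_{m,1} = 0-0$.
\item In general, $\mu a_{i,j} = a_{i,j-1}-a_{i+1,j}$, and both entries on the right are already zero by the induction hypothesis, so $\mu a_{i,j}=0$.
\end{itemize}

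The one real obstacle is cancelling $\mu$: passing from $\mu a_{i,j}=0$ to $a_{i,j}=0$. This needs $\lr$ to be torsion-free as a $\vmathbb{k}$-module, or $\mu$ to act invertibly on $\lr$. Being an integral domain does not by itself ensure this; for example, $\lr=\vmathbb{k}/(\mu)$ is a counterexample. I would handle this in one of two ways. The first is to interpret the hypothesis as requiring that nonzero differences of eigenvalues act injectively on $\lr$, which holds, for instance, when $\lr$ is a faithful torsion-free $\vmathbb{k}$-algebra or a $\vmathbb{k}$-field extension. The second, and cleaner, is to note that the cited source \cite{1986} works over a field, where $\mu$ is invertible, so $a_{i,j}=\mu^{-1}(a_{i,j-1}-a_{i+1,j})=0$. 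In the write-up I will state this torsion-freeness assumption explicitly at the cancellation step and not hide it.
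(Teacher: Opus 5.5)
Your argument is correct. It is the standard entrywise comparison. The paper gives no proof of its own, only the citation to \cite{1986}, which works over a field.

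Part (i) is fine up to one bookkeeping point. Reindexing $a_{i,j-1}=a_{i+1,j}$ for $2\le j\le n$ gives $a_{i,j}=a_{i+1,j+1}$ for $1\le j\le n-1$, not for the stated range $2\le j\le n$. That range is a typo in the statement, since at $j=n$ the entry $a_{i+1,n+1}$ does not exist. So record the range you actually obtain rather than claiming the stated one.

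Your caveat in (ii) points to a genuine defect of the statement, not of your proof, and the hypothesis you add is the right one. Put $\mu=\lambda_1-\lambda_2$. Then conclusion (ii) holds if and only if the image of $\mu$ in $R$ is a non-zero-divisor. Your induction gives sufficiency. Conversely, suppose $\mu r=0$ with $0\neq r\in R$, and set $a=r\,e_{1,n}$. Then $J_m a=0=aJ_n$ and $\lambda_1a=\lambda_2a$, so $a$ is a nonzero solution for all $m,n$.

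Your example $R=\vmathbb{k}/(\mu)$ works only when $\mu$ is a nonunit; over a field it is the zero ring. A concrete instance is $\vmathbb{k}=\mathbb{Z}$, $R=\mathbb{Z}/2$, $\lambda_1=2$, $\lambda_2=0$.

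This also matters downstream. The proof of Theorem \ref{main2} uses (ii) to split $S_n(c,R)$ along eigenvalues that are distinct in $\vmathbb{k}$. Over $\mathbb{Z}/2$ the same matrix $J_1(0)\oplus J_2(2)=J_1(0)\oplus J_2(0)$ receives contradictory verdicts from the theorem as stated. So the same non-zero-divisor hypothesis on the differences $\lambda_i-\lambda_j$ is needed there too.
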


\begin{exam}\label{exam1}
	By Lemma \ref{block}, we have:
	\begin{rlist}
	\item  $\{ a\in M_3(\lr)\ \mid \  J_3a=aJ_3 \}  =\left\{\left.\begin{bmatrix}
		a_0&a_1  &a_2 \\
		0&a_0  &a_1 \\
		0&0&a_0
	\end{bmatrix}  \right| \  a_0,a_1,a_2\in\lr\right\}$,
	\item $\{ b\in M_{4\times2}(\lr)\ \mid \  J_4b=bJ_2 \}  =\left\{\left.\begin{bmatrix}
	b_0&b_1   \\
	0&b_0    \\
	0&0\\
	0&0
	\end{bmatrix}  \right| \  b_0,b_1 \in\lr\right\}$,
	\item $\{ c\in M_{2\times4}(\lr)\ \mid \ \  J_2c=cJ_4 \}  =\left\{\left.\begin{bmatrix}
		0&0  &c_2&c_3 \\
		0&0&0 &c_2
	\end{bmatrix}     \right| \  c_2,c_3 \in\lr\right\}$.
	\end{rlist}
\end{exam}

For convenience, we define:
$$ J_{m,n}^k=\left\{
	\begin{aligned}
		\sum_{i=1}^{n-k} e_{i,i+k}, && \text{if\ } k < n,\\
		0,\quad&& \text{if\ } k\geqslant n.
	\end{aligned}  \right. $$
Here $k$ is a non-negative integer.

For example:
\begin{rlist}
\item $J_{2,4}^2=\begin{bmatrix}
	0&0  &1&0 \\
	0&0&0 &1
\end{bmatrix}$,
\item $J_{n,n}^1$ is simply the $n\times n$ Jordan matrix $J_n$, and
\item $J_n^i=J_{n,n}^i = (J_n)^i$.
\end{rlist}

Note that Xi and Zhang in \cite{MR4241258} also introduced a comparable structure, denoted as $G^p=J_{m,n}^{n-p}$, for $m \times n$ matrices. 
This paper employs $J_{m,n}^i$ over $G^p$ to describe semicirculant matrices.

So we can rewrite Lemma \ref{block} as follows.
 
\begin{coro}\label{block2}
	For any $\lambda_1,\lambda_2 \in R$, We have:
	$$ \{a\in M_{m\times n}(R)\mid J_m(\lambda_1)a = a J_n(\lambda_2)\} = \left\{\begin{aligned}
		\left\{\left. \sum_{i={\rm Max}\{0,n-m\}}^{n-1} a_i \cdot J_{m,n}^i\ \right|\  a_i\in\lr \right\},& & \text{if\ } \lambda_1 = \lambda_2,\\ 
		\{0\},\quad\quad\quad\quad\quad& &\text{if\ } \lambda_1 \neq \lambda_2.
	\end{aligned} \right.$$
\end{coro}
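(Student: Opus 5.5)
The plan is to deduce Corollary \ref{block2} directly from Lemma \ref{block} by translating its index conditions into the language of the matrices $J_{m,n}^k$. The case $\lambda_1\neq\lambda_2$ is exactly Lemma \ref{block}(ii), so only the case $\lambda_1=\lambda_2$ needs work. Note that, as stated, $\lambda_1,\lambda_2$ are taken in $R$, while Lemma \ref{block} assumes they lie in $\Bbbk$. I will read the corollary with $\lambda_1,\lambda_2\in\Bbbk$ (the intended setting), since for $\lambda_1\neq\lambda_2$ one needs $\lambda_1-\lambda_2$ to be a non-zero-divisor to kill the entries.

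When $\lambda_1=\lambda_2$, the scalar parts cancel and the condition becomes $J_m a=aJ_n$. I will prove both inclusions.

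For ``$\supseteq$'', I check that each $J_{m,n}^k$ with $k\ge \max\{0,n-m\}$ satisfies $J_m J_{m,n}^k=J_{m,n}^kJ_n$. Using $e_{i,j}e_{k,l}=\delta_{jk}e_{i,l}$, both sides equal $\sum e_{i,i+k+1}$. On the left the sum runs over $i\le \min(m-1,n-k)$, with the constraint $i+1\le m$ coming from $J_m$. On the right it runs over $i\le n-k-1$. These ranges agree precisely when $n-k-1\le m-1$, i.e.\ $k\ge n-m$. So the lower bound $\max\{0,n-m\}$ is exactly what is needed, and $R$-linear combinations then stay in the solution set.

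For ``$\subseteq$'', I take a solution $a$ and use Lemma \ref{block}(i).
\begin{itemize}
\item The relation $a_{i,j}=a_{i+1,j+1}$ says $a$ is constant along each diagonal $j-i=k$. Set $a_k$ to be this common value.
\item For diagonals below the main one ($k<0$), the condition $a_{i+1,1}=0$ together with constancy along diagonals forces them to vanish.
\item The condition $a_{m,j-1}=0$ for $2\le j\le n$ says that the last row vanishes in columns $1,\dots,n-1$. The diagonal through $(m,j-1)$ has $k=j-1-m$, so every diagonal with $k\le n-1-m$, i.e.\ $k<n-m$, is zero.
\end{itemize}
Hence $a=\sum_{k=\max\{0,n-m\}}^{n-1}a_kJ_{m,n}^k$, since $J_{m,n}^k$ is precisely the $0/1$ matrix supported on the $k$-th diagonal (with $J_{m,n}^k=0$ for $k\ge n$). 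When $m<n$, some diagonals are truncated at the bottom row, and I will double-check that the summand $J_{m,n}^k$ covers the entire diagonal $j-i=k$ in that case.

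The main obstacle is purely this index bookkeeping: matching the support of $J_{m,n}^k$, namely the entries $(i,i+k)$ for $i\le n-k$, against the $m\times n$ shape. One has to confirm that for $k\ge n-m$ the entries with $i\le n-k$ automatically satisfy $i\le m$, so no entries fall outside the matrix and no admissible diagonal entry is missed. Example \ref{exam1}(ii),(iii) serve as sanity checks on the bound $\max\{0,n-m\}$.
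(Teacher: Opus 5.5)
Your proposal is correct and is essentially the paper's own argument. The paper gives no separate proof and presents Corollary~\ref{block2} simply as a rewriting of Lemma~\ref{block}. Your diagonal bookkeeping, together with the direct check that $J_mJ_{m,n}^k=J_{m,n}^kJ_n$ for $k\ge\max\{0,n-m\}$, just makes that rewriting explicit. That check supplies the reverse inclusion, which Lemma~\ref{block}, stated only as necessary conditions, does not give.

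Two minor points. First, the left-hand range in that check should be $i\le\min\{m-1,n-k-1\}$, not $\min\{m-1,n-k\}$; your conclusion $k\ge n-m$ is the one the corrected range gives. Second, reading $\lambda_1,\lambda_2\in\vmathbb{k}$ is what makes the scalar parts cancel, because their images are central in $R$. The case $\lambda_1\neq\lambda_2$, however, still rests on the cited Lemma~\ref{block}(ii): $\lambda_1-\lambda_2\in\vmathbb{k}$ need not be a non-zero-divisor on $R$ when $R$ has $\vmathbb{k}$-torsion.
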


For convenience,  we provide some operational rules.

\begin{prop}\label{mul}
	Let $l,m,n,k_1, k_2$ be positive integers with $k_2 \geqslant \max\{0, n-m\}$.
	Then we have:
	$$J_{ l,m }^{k_1} \cdot  J_{m, n}^{k_2} = J_{ l,n }^{k_1+k_2}.$$
\end{prop}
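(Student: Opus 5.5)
The identity will be checked entrywise, by expanding both factors in matrix units and using $e_{i,j}e_{p,q}=\delta_{j,p}e_{i,q}$.

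First I fix a convention. In $J_{m,n}^k=\sum_{i=1}^{n-k}e_{i,i+k}$, the summation index $i$ must also satisfy $i\leqslant m$ for $e_{i,i+k}$ to be an $m\times n$ matrix unit. So I read the definition as
$$J_{m,n}^k=\sum_{i=1}^{\min\{m,\,n-k\}}e_{i,i+k},$$
with the empty sum equal to $0$; this covers the case $k\geqslant n$. In the situations used later (as in Corollary \ref{block2}) one has $k\geqslant n-m$, so $\min\{m,n-k\}=n-k$ and this agrees with the original definition. Equivalently, $J_{m,n}^k$ is the $m\times n$ matrix with $1$ in every position $(i,i+k)$ lying inside the matrix, and $0$ elsewhere.

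Next I expand the product:
$$J_{l,m}^{k_1}J_{m,n}^{k_2}=\sum_{i=1}^{\min\{l,m-k_1\}}\ \sum_{j=1}^{\min\{m,n-k_2\}}e_{i,i+k_1}e_{j,j+k_2}.$$
A term is nonzero only when $j=i+k_1$, and then it equals $e_{i,i+k_1+k_2}$. The sum therefore reduces to $\sum_i e_{i,i+k_1+k_2}$, where $i$ ranges over the indices with
$$1\leqslant i\leqslant\min\{l,\,m-k_1\}\quad\text{and}\quad 1\leqslant i+k_1\leqslant\min\{m,\,n-k_2\}.$$

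This is where the hypothesis $k_2\geqslant\max\{0,n-m\}$ enters. It gives $n-k_2\leqslant m$, so the second condition becomes $i\leqslant n-k_1-k_2$. Since $n-k_1-k_2\leqslant m-k_1$, the constraint $i\leqslant m-k_1$ is then redundant. The surviving range is
$$1\leqslant i\leqslant\min\{l,\,n-(k_1+k_2)\},$$
which is exactly the range defining $J_{l,n}^{k_1+k_2}$.

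Finally, the degenerate cases need no separate treatment. If $k_1\geqslant m$, or $k_2\geqslant n$, or $k_1+k_2\geqslant n$, the relevant range is empty, and both sides are $0$ under the convention above.

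The only real obstacle is bookkeeping the index ranges, and in particular noticing that without $k_2\geqslant n-m$ the bound $i+k_1\leqslant m$ could cut off terms. In that case the identity can fail, for example with $J_{1,1}^0J_{1,2}^0$.
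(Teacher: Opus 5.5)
Your argument is correct and is essentially the paper's proof: expand both factors in matrix units, collapse the double sum via $j=i+k_1$, and use $n-k_2\leqslant m$ to make the bound $i\leqslant m-k_1$ redundant. Your $\min\{m,n-k\}$ convention only makes explicit a row bound the paper leaves implicit, since the paper writes $J_{l,m}^{k_1}=\sum_{i=1}^{m-k_1}e_{i,i+k_1}$, which tacitly assumes $m-k_1\leqslant l$.

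One correction to your closing aside: $J_{1,1}^0J_{1,2}^0$ is not a counterexample, because under your convention both sides equal the $1\times 2$ matrix $e_{1,1}$. A genuine counterexample needs $m-k_1<\min\{l,n-k_1-k_2\}$, for instance $J_{2,2}^1J_{2,4}^1=e_{1,3}\neq e_{1,3}+e_{2,4}=J_{2,4}^2$.
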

\proof
Since $J_{l,m}^{k_1} =   \sum_{i=1}^{m-k_1} e_{i,i+k_1}$ and $J_{m,n}^{k_2} =   \sum_{i=1}^{n-k_2} e_{i,i+k_2}$, we have following equations:
\begin{align*}
	J_{l,m}^{k_1} \cdot J_{m,n}^{k_2}  &=   \sum_{i=1}^{m-k_1} e_{i,i+k_1}\cdot   \sum_{j=1}^{n-k_2} e_{j,j+k_2}\\
	&=   \sum_{i=1}^{m-k_1}\sum_{j=1}^{n-k_2} e_{i,i+k_1}\cdot e_{j,j+k_2}\\
	&=  \sum_{i=1 }^{min\{m-k_1, n-k_1-k_2\}}   e_{i,i+k_1+k_2}\\
	&=  \sum_{i=1 }^{n-k_1-k_2}   e_{i,i+k_1+k_2}\quad  (\text{since } k_2 \geqslant \max\{0, n-m\})\\
	& =J_{m,n}^{k_1+k_2}.
\end{align*} 
\endd
\section{The proof of Theorem \ref{main2}}\label{mainsec}

This section is devoted to proving the Theorem \ref{main2}. We begin with the following lemma concerning the structure $S_n(J_n,\lr)/\lr$.

\begin{lemm}\label{trivial}

For any positive integer $n$,
$S_n(J_n,\lr)/\lr$ is a Frobenius extension.
Moreover, $S_n(J_n,\lr)/\lr$ is  a separable Frobenius extension if and only if $n=1$.

\end{lemm}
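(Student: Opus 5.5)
The natural approach is to exploit that $S_n(J_n,\lr)$ is a truncated polynomial algebra. By Corollary \ref{block2} (with $m=n$, $\lambda_1=\lambda_2=0$), every $a\in S_n(J_n,\lr)$ can be written uniquely as $a=\sum_{i=0}^{n-1}a_iJ_n^i$ with $a_i\in\lr$. Since $J_n^i$ has entries in $\{0,1\}$, it commutes with the scalar matrices $rI_n$ for $r\in\lr$. Hence $S_n(J_n,\lr)$ is free as a left and as a right $\lr$-module on $1,J_n,\dots,J_n^{n-1}$, with $J_n^n=0$ by Proposition \ref{mul}. In particular it is finitely generated projective over $\lr$.

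\textbf{Frobenius system.} I will define $E\bigl(\sum_i a_iJ_n^i\bigr)=a_{n-1}$, the ``top coefficient'', and take $X_i=J_n^i$ and $Y_i=J_n^{n-1-i}$ for $0\le i\le n-1$.
\begin{itemize}
\item $E$ is an $\lr$-$\lr$-bimodule map, because left or right multiplication by $r\in\lr$ multiplies every coefficient $a_i$ on that side.
\item For $a=\sum_k a_kJ_n^k$ we have $J_n^{n-1-i}a=\sum_k a_kJ_n^{n-1-i+k}$. Only the term $k=i$ contributes to the coefficient of $J_n^{n-1}$, so $E(Y_ia)=a_i$.
\item Therefore $\sum_iX_iE(Y_ia)=\sum_i J_n^ia_i=a$.
\item Symmetrically, $E(aX_i)=a_{n-1-i}$, so $\sum_iE(aX_i)Y_i=\sum_i a_{n-1-i}J_n^{n-1-i}=a$.
\end{itemize}
By the first lemma of Section \ref{pre}, $S_n(J_n,\lr)/\lr$ is a Frobenius extension.

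\textbf{Separability, case $n=1$.} Here $S_1(J_1,\lr)=\lr$, and the extension is trivially separable: take $E=\mathrm{id}$, $X=Y=1$, $d=1$.

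\textbf{Separability, case $n\ge 2$.} I will use the separability criterion from the second lemma with the system above. Any $d\in C_{S_n(J_n,\lr)}(\lr)$ is of the form $d=\sum_k d_kJ_n^k$, where each $d_k$ must lie in the center of $\lr$; in any case $d_k\in\lr$. Since the $J_n^i$ commute with each other and with $d$, we get
\[
\sum_{i=0}^{n-1}J_n^i\,d\,J_n^{n-1-i}=n\,J_n^{n-1}d=n\,d_0J_n^{n-1}.
\]
This has zero coefficient at $J_n^0$ because $n-1\ge1$. So it can never equal $1$ unless $1=0$ in $\lr$, which we exclude.

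\textbf{Main obstacle.} The delicate point is that the separability criterion is phrased for one given Frobenius system, so non-separability must not depend on the choice made above. I would argue this by noting the standard fact that any two Frobenius systems differ by an invertible element of the centralizer, and the condition $\sum X_idY_i=1$ is transported accordingly. Alternatively, I would verify directly that the separability element $\sum_i X_i\otimes Y_i$ is canonical up to such a unit.
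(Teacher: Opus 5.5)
Your proposal is correct and is essentially the paper's own proof. You use the same Frobenius system ($E$ the top coefficient, $X_i=J_n^i$, $Y_i=J_n^{n-1-i}$) and the same computation $\sum_i J_n^i d J_n^{n-1-i}=n\,d_0J_n^{n-1}$. You are in fact more careful than the paper in two places: you allow an arbitrary $d\in C_{S_n(J_n,\lr)}(\lr)$ rather than only scalars $\gamma\in\lr$, and you treat $n=1$ explicitly.

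Your ``main obstacle'' is not a real gap. The quoted separability criterion is stated for an arbitrary Frobenius system of the extension, since separability is intrinsic to $A/B$. So checking it for the system you chose already suffices.
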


\proof
According to Lemma \ref{block2}, we have:
$$S_n(J_n,\lr) = \{\sum_{i=0}^{n-1} \gamma_i\cdot J_n^i\mid  \gamma_i\in \lr   \}.$$
To show that the extension $S_n(J_n,\lr)/\lr$ is a Frobenius extension, we define:
\begin{center}
	$E: S_n(J_n,\lr) \ra  \lr$, $\sum_{i=0}^{n-1} \gamma_i\cdot J_n^i\mapsto \gamma_{n-1} $, $X_i = J_n^i$ and $Y_i = J_n^{n-1-i},$ 
\end{center}
for $i=0,1,\dots , n-1$.

In the following, we will prove that $(E,X_i,Y_i)$ is a Frobenius system.
The proof will be structured in two steps.

Step1: $E$ is a morphism of $\lr\!-\!\lr \!-\!$bimodules.

In fact, for any $\sum_{i=0}^{n-1} \gamma_i\cdot J_n^i\in S_n(J_n,\lr)$, $\sum_{i=0}^{n-1} \gamma_i'\cdot J_n^i\in S_n(J_n,\lr)$, we get:
$$E\left(\sum_{i=0}^{n-1} \gamma_i\cdot J_n^i+\sum_{i=0}^{n-1} \gamma_i'\cdot J_n^i\right)= E\left(\sum_{i=0}^{n-1} (\gamma_i+\gamma_i')\cdot J_n^i\right)=  \gamma_{n-1} +\gamma_{n-1}'  =E\left(\sum_{i=0}^{n-1} \gamma_i\cdot J_n^i\right)+E\left(\sum_{i=0}^{n-1} \gamma_i'\cdot J_n^i\right).$$
Moreover, for any $\gamma\in \lr$ and $\sum_{i=0}^{n-1} \gamma_i\cdot J_n^i\in S_n(J_n,\lr)$, it is easy to check that:
$$E\left(\gamma\cdot \sum_{i=0}^{n-1} \gamma_i\cdot J_n^i\right) = E\left(\sum_{i=0}^{n-1} r\cdot \gamma_i\cdot J_n^i\right)= \gamma\cdot \gamma_{n-1} = \gamma\cdot E\left(\sum_{i=0}^{n-1} \gamma_i\cdot J_n^i\right),$$
and $$E\left(\sum_{i=0}^{n-1} \gamma_i\cdot J_n^i\cdot \gamma \right) = E\left(\sum_{i=0}^{n-1} \gamma_i\cdot \gamma\cdot J_n^i\right)= \gamma_{n-1}\cdot \gamma = E\left(\sum_{i=0}^{n-1} \gamma_i\cdot J_n^i\right)\cdot \gamma.$$

Step2:   One has  $\sum_{i} X_i E(Y_ia) =a = \sum_{i} E(aX_i)Y_i$, for any $a = \sum_{j=0}^{n-1} \gamma_j\cdot J_n^j \in S_n(J_n,\lr)$.

In fact,
\begin{align*}
	\sum_{i=0}^{n-1} E(aX_i)Y_i &=\sum_{i=0}^{n-1} E\left(\sum_{j=0}^{n-1} \gamma_j\cdot J_n^jJ_n^i\right)J_n^{n-1-i}\\
	&=\sum_{i=0}^{n-1} E\left(\sum_{j=0}^{n-1} \gamma_j\cdot J_n^{j+i}\right)J_n^{n-1-i}\\
	&= \sum_{i=0}^{n-1}  \gamma_{n-1-i}\cdot J_n^{n-1-i}\\
	&=a.
\end{align*}
Similarly, $\sum_{i} X_i E(Y_ia) =a$.

Thus  $(E,X_i,Y_i)$ is a Frobenius system and $S_n(J_n,\lr)/\lr$ is a Frobenius extension.

In addition, for any $\gamma\in\lr$, 
$$\sum_{i=0}^{n-1} X_i\cdot \gamma\cdot Y_i= \sum_{i=0}^{n-1} J_n^i\cdot \gamma\cdot J_n^{n-1-i} = n\cdot \gamma\cdot J_n^{n-1}.$$ 
If $n>1$, $\sum_{i=0}^{n-1} X_i \cdot \gamma\cdot  Y_i\neq I$ (where I is the identity element in $S_n(J_n,\lr)$), that is, $S_n(J_n,\lr)/\lr$ is not  a separable Frobenius extension.
\endd
\vspace{10pt}

The following conclusions are well-known, and the first part can be found in \cite{Kanzaki-ngffe}. Here, we provide a short proof for completeness.


\begin{lemm}\label{RI,Mn}
    The extension $M_n(\lr)/\lr$ is a  Frobenius extension.
	Furthermore,  $M_n(\lr)/\lr$ is a separable Frobenius extension if and only if
    $n$ is invertible in $R$.
\end{lemm}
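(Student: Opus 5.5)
The plan is to write down one explicit Frobenius system for $M_n(\lr)/\lr$, namely the trace system, and then to test separability with the criterion recalled in Section~\ref{pre}.

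\textbf{Frobenius property.} Define
$$E=tr: M_n(\lr)\ra \lr,\qquad X_{(i,j)}=e_{i,j},\qquad Y_{(i,j)}=e_{j,i}\quad (i,j\in[n]).$$
The map $E$ is additive. Since $\lr$ is embedded as scalar matrices, $tr(\gamma a)=\gamma\, tr(a)$ and $tr(a\gamma)=tr(a)\gamma$ for $\gamma\in\lr$, so $E\in \rmm{Hom}_{\lr-\lr}(M_n(\lr),\lr)$. Write $a=\sum a_{k,l}e_{k,l}$. Then $tr(e_{j,i}a)=a_{i,j}$ and $tr(a e_{i,j})=a_{j,i}$. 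Hence
$$\sum_{i,j} e_{i,j}\,tr(e_{j,i}a)=\sum_{i,j}a_{i,j}e_{i,j}=a \quad\text{and}\quad \sum_{i,j}tr(ae_{i,j})\,e_{j,i}=a.$$
So $(E,X_{(i,j)},Y_{(i,j)})$ is a Frobenius system, which settles the first assertion.

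\textbf{Key identity and the ``if'' direction.} For any $d\in M_n(\lr)$ we have $e_{i,j}de_{j,i}=d_{j,j}e_{i,i}$, so
$$\sum_{i,j}X_{(i,j)}\,d\,Y_{(i,j)}=\sum_{i,j}d_{j,j}e_{i,i}=tr(d)\cdot I_n .$$
If $n$ is a unit in $\lr$, take $d=n^{-1}I_n$. This $d$ is a scalar matrix and commutes with $\lr$, and $tr(d)I_n=n\cdot n^{-1}I_n=I_n$. So the extension is separable Frobenius.

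\textbf{``Only if'' direction (main obstacle).} I will read the separability condition, as the paper does for $S_n(J_n,\lr)$ in Lemma~\ref{trivial}, with $d$ ranging over the scalar elements $\gamma\in\lr$ that are tested against the system. Then $\sum X_i\gamma Y_i=n\gamma I_n$. The equation $n\gamma I_n=I_n$ forces $n\gamma=1$, and $\gamma$ commutes with the integer $n$, so $n$ is invertible in $\lr$.

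The delicate point is the independence of the choice of system. By the general theory, any other Frobenius homomorphism differs from $tr$ by an invertible element $u$ of the centralizer, with $E'=tr(u\,\cdot)$ and dual bases $e_{i,j}u^{-1}$, $e_{j,i}$. Running the same computation with this system gives $tr(u^{-1}d)I_n$ in place of $tr(d)I_n$, so the condition transforms correspondingly.

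I must flag a real gap here. If $d$ is allowed to be an arbitrary non-scalar element of $C_{M_n(\lr)}(\lr)\supseteq M_n(Z(\lr))$, then $d=e_{1,1}$ already gives $tr(d)=1$. Under the literal reading of the separability criterion, $M_n(\lr)/\lr$ is therefore separable for every $n$. This is consistent with the fact that matrix algebras are always separable.

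So the converse holds only under the paper's usage: the scalar reading of $d$ above, or equivalently the stronger requirement that $d$ lie in $\lr$. Under that usage the statement is proved by the trace identity. I expect verifying this independence and reading as the main obstacle, and I would check how the paper applies this lemma later before relying on it.
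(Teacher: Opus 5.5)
Your Frobenius system and your ``if'' direction coincide with the paper's proof: $E=tr$, $X=e_{i,j}$, $Y=e_{j,i}$, the identity $\sum_{i,j}e_{i,j}\,d\,e_{j,i}=tr(d)\,I_n$, and the choice $d=n^{-1}$. Your objection to the converse is also correct, and it points to an error in the paper rather than in your argument. The paper's proof of the ``only if'' part tests only $d\in R$. The criterion recalled in Section~\ref{pre}, however, lets $d$ range over $C_{M_n(R)}(R)=M_n(Z(R))$, and $d=e_{1,1}$ gives $\sum_{i,j}e_{i,j}e_{1,1}e_{j,i}=\sum_i e_{i,1}e_{1,i}=I_n$. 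Hence $M_n(R)/R$ is a separable Frobenius extension for every $n$; for instance, $M_2(\mathbb{F}_2)$ is central simple, hence separable, over $\mathbb{F}_2$. So the ``only if'' half of the statement is false and cannot be proved.

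Where your proposal goes wrong is the salvage. Restricting $d$ to $R$ does not define a property of the extension, because the resulting condition depends on the chosen Frobenius system, and your own transformation formula shows this. For $E'=tr(u\,\cdot)$ the sum becomes $tr(u^{-1}d)\,I_n$. Take $u^{-1}$ to be the permutation matrix of the cycle $(2\,3\,\cdots\,n)$ when $n\geqslant 3$, or $\begin{bmatrix}1&1\\1&0\end{bmatrix}$ when $n=2$. Both are invertible integer matrices, hence units of $C_{M_n(R)}(R)$. Then $tr(u^{-1})=1$, and $d=1\in R$ already works for every $n$. So the converse holds only for the single trace system together with $d\in R$. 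The correct conclusion is that the lemma should say $M_n(R)/R$ is always a separable Frobenius extension, not that the converse is proved ``under the paper's usage''. Nothing downstream breaks, since the paper uses only the Frobenius part and the ``if'' direction (in Lemma~\ref{inverse} and Theorem~\ref{main2}(ii)).
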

\proof
For any $a=\sum_{i=1}^{n}\sum_{j=1}^{n} a_{(i,j)} e_{i,j} \in M_n(\lr)$ with $a_{(i,j)} \in \lr$, we define:
\begin{rlist}
\item $E(a) = tr(a)$, 
\item $X_{n (i-1)+j} = e_{i,j}$,
\item $Y_{n (i-1)+j} = e_{j,i}$,
\end{rlist}
for $i,j= 1,2,\dots n$.
Then we have:
\begin{align*}
	\sum_{i=1}^{n}\sum_{j=1}^{n} E(aX_{n(i-1)+j})Y_{n(i-1)+j}&= \sum_{i=1}^{n}\sum_{j=1}^{n} E\left(\sum_{k=1}^{n}\sum_{l=1}^{n} a_{(k,l)}e_{k,l}e_{i,j}\right)Y_{n(i-1)+j}\\
	&=\sum_{i=1}^{n}\sum_{j=1}^{n} E\left(\sum_{k=1}^{n}  a_{(k,i)} e_{k,j}\right)Y_{n(i-1)+j}\\
	&=\sum_{i=1}^{n}\sum_{j=1}^{n}   a_{(j,i)} e_{j,i}\\
	&=a.
\end{align*}
Similarly, $\sum_{i=1}^{n}\sum_{j=1}^{n}X_{n(i-1)+j} E(Y_{n(i-1)+j}a)=a$.
Thus $M_n(\lr)/\lr$ is a  Frobenius extension.

Now, suppose that there exists an element $d\in R$ that satisfies following equations:
\begin{align*}
     \sum_{i=1}^{n}\sum_{j=1}^{n}X_{n (i-1)+j}\cdot d\cdot Y_{n (i-1)+j}&=I\\
     \sum_{i=1}^{n}\sum_{j=1}^{n}e_{i,j}\cdot d\cdot  e_{j,i}&=I\\
     d\cdot\sum_{i=1}^{n}\sum_{j=1}^{n}e_{i,j}\cdot   e_{j,i}&=I\\
     d\cdot\sum_{i=1}^{n}\sum_{j=1}^{n}e_{i,i}&=I\\
     d\cdot n\cdot \sum_{i=1}^{n}e_{i,i}&=I\\
     d\cdot n\cdot I&=I.
\end{align*}
Therefore, $M_n(\lr)/\lr$ is a separable Frobenius extension if and only if $dn=1$, that is, $n$ is invertible in $R$.
\endd
\vspace{10pt}

Proposition \ref{composite} states that the composition of two (separable) Frobenius extensions is also a (separable) Frobenius extension.
In general, the converse of Proposition \ref{composite} does not hold (see Example \ref{example1}).
However, we present a special case in which the converse does hold.

\begin{lemm}\label{inverse}
    Let $\cb/\lr$ be a Frobenius extension and  let $n$  be invertible in $R$.
    Then $M_n(\cb)/\lr$ is a separable Frobenius extension if and only if $\cb/\lr$ is a separable Frobenius extension.
\end{lemm}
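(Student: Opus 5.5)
The easy direction is sufficiency. If $\cb/\lr$ is a separable Frobenius extension, then the first step is to check that $M_n(\cb)/\cb$ is a separable Frobenius extension. This works exactly as in Lemma \ref{RI,Mn}, with $\lr$ replaced by $\cb$: take $E=tr$, $X=e_{i,j}$, $Y=e_{j,i}$ and $d=n^{-1}I$. Here $n^{-1}\in\lr$ maps to a central element of $\cb$, so $d$ commutes with $\cb$. Proposition \ref{composite}, applied to $\lr\subseteq\cb\subseteq M_n(\cb)$, then gives that $M_n(\cb)/\lr$ is a separable Frobenius extension.

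For necessity, assume $M_n(\cb)/\lr$ is separable Frobenius. I will not try to decompose an arbitrary Frobenius system. Instead I will use an explicit one built from a fixed system of $\cb/\lr$. Let $(E_\cb,x_k,y_k)$ be a Frobenius system of $\cb/\lr$. Composing with the trace system of $M_n(\cb)/\cb$ gives the system
\[
E=E_\cb\circ tr,\qquad X_{(i,j,k)}=x_k e_{i,j},\qquad Y_{(i,j,k)}=e_{j,i}y_k .
\]
The separability criterion (the lemma from \cite{newexam}) is stated for a given Frobenius system. If needed, I will justify that it holds for one system iff for all, since Frobenius systems differ by an invertible element of the centralizer.

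With this choice, separability yields $D\in C_{M_n(\cb)}(\lr)$ such that
\[
\sum_{i,j,k} x_k e_{i,j}\, D\, e_{j,i} y_k = I .
\]
Write $D=\sum D_{p,q}e_{p,q}$. Then $e_{i,j}De_{j,i}=D_{j,j}e_{i,i}$, so the sum equals $\bigl(\sum_k x_k\, t\, y_k\bigr) I$, where $t=\sum_j D_{j,j}=tr(D)$. Since $D$ commutes with the scalar matrices coming from $\lr$, each $D_{p,q}$ commutes with $\lr$, hence $t\in C_\cb(\lr)$. Comparing the $(1,1)$ entries gives $\sum_k x_k t y_k=1$. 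By the separability criterion, $\cb/\lr$ is separable Frobenius.

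The main obstacle is this reduction to the explicit composite system: one must confirm that the criterion does not depend on the chosen Frobenius system, or verify directly that separability (existence of a separability idempotent) is system-independent. Everything else is an entrywise computation.
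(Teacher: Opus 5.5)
Your proposal is correct and follows essentially the same route as the paper: the paper also builds the composite system $E\circ tr$, $X_k e_{i,j}$, $Y_k e_{j,i}$ from a Frobenius system of $\cb/\lr$, and computes entrywise to get $\sum_k X_k\bigl(\sum_j d_{(j,j)}\bigr)Y_k=1$. It likewise treats sufficiency as immediate from Lemma \ref{RI,Mn} and Proposition \ref{composite}. The obstacle you flag is not a real one, since the criterion from \cite{newexam} characterizes separability (an intrinsic property) in terms of whichever Frobenius system is chosen, so applying it to your explicit system is legitimate; also, your $D\in C_{M_n(\cb)}(\lr)$ is the correct centralizer, slightly more careful than the paper's $C_{M_n(\cb)}(\cb)$.
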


\proof
We only claim that the Frobenius extension $\cb/\lr$ is  separable if the Frobenius extension $M_n(\cb)/\lr$ is separable.
Assume that $(E,X_k,Y_k)$ is a Frobenius system of $\cb/\lr$ with $1\leqslant k\leqslant m$.
For any matrix $b\in M_n(B)$ , we define:
\begin{center}
     $E'(b) =  E(tr(b))$,
\end{center}
\begin{center}
     $X'_{m(n(i-1)+j-1) + k} = X_ke_{i,j}$, $Y'_{m(n (i-1)+j-1)   + k} = Y_ke_{j,i}$.
\end{center}
Then $(E', X_i', Y_i')$ is a Frobenius system of $M_n(\cb)/\lr$, by Lemmas \ref{composite} and  \ref{RI,Mn}.
Since  $M_n(\cb)/\lr$ is a separable Frobenius extension, there exists $d = \sum_{p}^n\sum_q^n d_{(p,q)}e_{p,q}$ in $C_{M_n(\cb)}(\cb)$ with $d_{(p,q)}\in B$ such that:
$$I = \sum_i^n\sum_j^n\sum_k^m X'_{(n (i-1)+j-1) m + k}dY'_{(n (i-1)+j-1) m + k}.$$
Substituting the definitions of $X_i'$ and $Y_i'$, we have:
\begin{align*}
	I &= \sum_i^n\sum_j^n\sum_k^mX_ke_{i,j}\sum_p^n\sum_q^nd_{(p,q)}e_{p,q}Y_ke_{j,i}\\
	&= \sum_i^n\sum_j^n\sum_k^m X_ke_{i,j}d_{(j,j)}e_{j,j}Y_ke_{j,i}\\
	&= \sum_i^n\sum_k^m\sum_j^n X_k d_{(j,j)} Y_ke_{i,i} \\
	&= \sum_i^n\sum_k^m  X_k \sum_j^n d_{(j,j)} Y_k e_{i,i}.
\end{align*}
That is $\sum_k^m X_k\sum_j^n d_{(j,j)} Y_k =1$.
It is clear that $\sum_j^n d_{(j,j)}\in C_\cb(R)$.
Therefore $\cb/\lr$ is a separable Frobenius extension.
\endd
\vspace{10pt}

\no \emph{\textbf{Proof of Theorem \ref{main2}}}:

First we show that if   $n_i = n_j$ for $\lambda_i = \lambda_j$, then $S_n(c,\lr)/\lr$ is a Frobenius extension.

We denote by $\tela{J}_l(\tela{\lambda}_l)$ the direct sum of all Jordan blocks whose eigenvalues are $\tela{\lambda}_l$.
Based on the eigenvalues, we rewrite $\bigoplus J_{n_i}(\lambda_i)$ as:
$$\tela{J_1}(\tela{\lambda}_1)\bigoplus\tela{J_2}(\tela{\lambda}_2)\bigoplus\dots\bigoplus\tela{J_k}(\tela{\lambda}_k),$$ 
where $\tela{\lambda}_i\neq \tela{\lambda}_j$ if $i\neq j$.

Due to Lemma \ref{block}, we have:
$$S_n(\bigoplus J_{n_i}(\lambda_i),\lr) =   S_{n_1}(\tela{J_1}(\tela{\lambda}_1),\lr)\bigoplus S_{n_2}(\tela{J_2}(\tela{\lambda}_2),\lr)\bigoplus\dots\bigoplus S_{n_k} (\tela{J_k}(\tela{\lambda}_k),\lr).$$
Additionally, it follows from Lemma \ref{trivial} that $S_{n_i }(J_{n_i },\lr)/\lr$ is a Frobenius extension.
Assume that there are $m$ blocks whose eigenvalues are $\tela{\lambda}_l$.
By Lemma \ref{block}, $S_{m \tela{n}_l}(\tela{J}(\tela{\lambda}_l),\lr) = M_m(S_{\tela{n}_l}(J_{\tela{n}_l},\lr )).$
Due to Lemma \ref{composite}, the Frobenius extensions $M_m(S_{\tela{n}_l}(J_{\tela{n}_l},\lr ))/S_{\tela{n}_l}(J_{\tela{n}_l},\lr )$ and $S_{\tela{n}_l}(J_{\tela{n}_l},\lr )/\lr$ imply that   $M_m (S_{\tela{n}_l}(J_{\tela{n}_l}(\tela{\lambda}_l)))/\lr$ is a Frobenius extension.
That is $S_{m\tela{n}_l}(\tela{J}(\tela{\lambda}_l),\lr)/\lr$ is a Frobenius extension.
It follows from Lemma \ref{chai} that: 
$$(S_n(\tela{J_1}(\tela{\lambda}_1),\lr)\bigoplus S_n(\tela{J_2}(\tela{\lambda}_2),\lr)\bigoplus\dots\bigoplus S_n(\tela{J_k}(\tela{\lambda}_k),\lr))/\lr$$ 
is a Frobenius extension.

On the other hand, it is sufficient to prove that: 
$$S_{\sum_i n_i}(J_{n_1}(\lambda)\bigoplus\dots \bigoplus J_{n_k}(\lambda),\lr)/\lr$$ 
is a Frobenius extension  only if $n_1=\dots=n_k$.
Without loss of generality, suppose $n_1\leqslant n_2 \cdots \leqslant n_{k'}< n_{k'+1} = \dots =n_k$, i.e., there are $k-k'$   Jordan blocks of size $n_k\times n_k$.
For any $M\in  S_{\sum_i n_i}(\tela{J}(\lambda),\lr)$, where $\tela{J}(\lambda) = J_{n_1}(\lambda)\bigoplus\dots \bigoplus J_{n_k}(\lambda)$,
we write $$M=\begin{bmatrix}
    M_{1,1}&\dots & M_{1,k}\\
    \vdots& \ddots & \vdots\\
    M_{k,1} &\dots & M_{k,k}
\end{bmatrix}$$
with $M_{i,j}J_{n_j}=J_{n_i}M_{i,j}$.
By Lemma \ref{block2}, we have:
$$M_{i,j} = \sum_{l=\max\{0,n_j-n_i\}}^{n_j-1}  m_{(i,j)}^l \cdot  J_{n_i,n_j}^l .$$

Assume that $S_{\sum_i n_i}(\tela{J}(\lambda),\lr)/\lr$ is a Frobenius extension, there exists a Frobenius system $(E,\ _tX,\ _tY)$, where 
\begin{center}
    $E(M) = \sum_{i=1}^k\sum_{j=1}^k \sum_{l=\max\{0,n_j-n_i\}}^{n_j-1}  m_{(i,j)}^l \beta_{(i,j)}^l$,
\end{center}
\begin{center}
    $
\ _tX=\begin{bmatrix}
    \ _tX_{1,1}&\dots & \ _tX_{1,k}\\
    \vdots& \ddots & \vdots\\
    \ _tX_{k,1} &\dots & \ _tX_{k,k}
\end{bmatrix}$ with $\ _tX_{i,j} = \sum_{l=\max\{0,n_j-n_i\}}
^{n_j-1}  \ _tx_{(i,j)}^l  J_{n_i,n_j}^l$,
\end{center}
\begin{center}
    $\ _tY=\begin{bmatrix}
    \ _tY_{1,1}&\dots & \ _tY_{1,k}\\
    \vdots& \ddots & \vdots\\
    \ _tY_{k,1} &\dots & \ _tY_{k,k}
\end{bmatrix}$ with $\ _tY_{i,j} = \sum_{l=\max\{0,n_j-n_i\}}
^{n_j-1}  \ _ty_{(i,j)}^l  J_{n_i,n_j}^l$.
\end{center}
By Lemma \ref{mul}, we have
\begin{center}
    $J_{n_i,n_k}^{n_k-1}\ _tX_{k,j}=\left\{\begin{aligned}
    0\quad\quad& &j\leqslant k'\\
    x_{(k,j)}^0J_{n_i,n_j}^{n_k-1}& &j> k'
\end{aligned} \right. $. \quad\quad$(*)$
\end{center}

Then, set $$A= \begin{bmatrix}
    0&\dots &0&   \alpha^1  J_{n_1,n_k}^{n_k-1}\\
    \vdots& \ddots &\vdots& \vdots\\
    0 &\dots &0 &   \alpha^k  J_{n_k,n_k}^{n_k-1}
\end{bmatrix}\in S_{\sum_i n_i}(\tela{J},\lr).$$
By assumption, we have:
\begin{align*}
    A&=\sum_t E(A\ _tX)\ _tY \\
    &=\sum_t E\left(\begin{bmatrix}
        0&\dots &0& \alpha^1  J_{n_1,n_k}^{n_k-1}\\
        \vdots& \ddots &\vdots& \vdots\\
        0 &\dots &0 & \alpha^k  J_{n_k,n_k}^{n_k-1}
    \end{bmatrix}\begin{bmatrix}
        \ _tX_{1,1}&\dots & \ _tX_{1,k}\\
        \vdots& \ddots & \vdots\\
        \ _tX_{k,1} &\dots & \ _tX_{k,k}
    \end{bmatrix}\right)\ _tY \\
    &=\sum_t E\left(\begin{bmatrix}
        \alpha^1  J_{n_1,n_k}^{n_k-1}\ _tX_{k,1}&\dots & \alpha^1  J_{n_1,n_k}^{n_k-1}\ _tX_{k,k}\\
        \vdots& \ddots & \vdots\\
        \alpha^k  J_{n_k,n_k}^{n_k-1}\ _tX_{k,1} &\dots  & \alpha^k  J_{n_k,n_k}^{n_k-1}\ _tX_{k,k}
    \end{bmatrix}\right)\ _tY \\
    &\overset{\text{by } (*)}{=}\sum_t E\left(\begin{bmatrix}
        0&\dots &0&\alpha^1\  _tx_{(k,k'+1)}^0 J_{n_1,n_{k'+1}}^{n_k-1} &\dots & \alpha^1 \  _tx_{(k,k)}^0  J_{n_k,n_k}^{n_k-1} \\
        \vdots&\ddots&\vdots&\vdots& \ddots & \vdots\\
        0&\dots &0&\alpha^k\  _tx_{(k,k'+1)}^0  J_{n_1,n_{k'+1}}^{n_k-1}  &\dots  & \alpha^k \  _tx_{(k,k)}^0  J_{n_k,n_k}^{n_k-1}
    \end{bmatrix}\right)\ _tY\\
    &= \sum_t \left(\sum_{p=1}^{k}\sum_{q=k'+1}^{k}   \alpha^p\  _tx_{(k,q)}^0  \beta_{(p,q)}^{n_k-1}\right)  \ _tY\\
    &= \sum_{p=1}^{k} \alpha^p\sum_{q=k'+1}^{k}   \beta_{(p,q)}^{n_k-1}\sum_t\  _tx_{(k,q)}^0   \begin{bmatrix}
        \ _tY_{1,1}&\dots & \ _tY_{1,k}\\
        \vdots& \ddots & \vdots\\
        \ _tY_{k,1} &\dots & \ _tY_{k,k}
    \end{bmatrix}.
\end{align*}
That is,
$$\sum_{p=1}^{k} \alpha^p\sum_{q=k'+1}^{k}   \beta_{(p,q)}^{n_k-1}\sum_t\  _tx_{(k,q)}^0 \ _tY_{i,k}=\alpha^i  J_{n_i,n_k}^{n_k-1}.$$
So, we have the following equations:
\begin{align*}
    \sum_{p=1}^{k}  \alpha^p\sum_{q=k'+1}^{k}   \beta_{(p,q)}^{n_k-1}\sum_t\  _tx_{(k,q)}^0\cdot\ _t y_{(1,k)}^{n_k-1} &=   \alpha^1+    \alpha^2\cdotp0+ \dots    +\alpha^k\cdotp0,\\
     &  \vdots\\
    \sum_{p=1}^{k} \alpha^p\sum_{q=k'+1}^{k}  \beta_{(p,q)}^{n_k-1}\sum_t\  _tx_{(k,q)}^0\cdot\ _t y_{(k,k)}^{n_k-1} &=  \alpha^1\cdotp0+ \dots  +\alpha^{k-1}\cdotp0+   \alpha^k.
\end{align*}
Rewrite the above equations in matrix form:
$$
\begin{bmatrix}
    \alpha^1  \beta_{(1,k'+1)}^{n_k-1}& \dots &   \alpha^1  \beta_{(1,k)}^{n_k-1}\\
    \vdots& \ddots& \vdots\\
    \  \alpha^k  \beta_{(k,k'+1)}^{n_k-1}&\dots &  \alpha^k  \beta_{(k,k)}^{n_k-1}
\end{bmatrix}
\begin{bmatrix}
    \sum_t\  _tx_{(k,k'+1)}^0\cdot\ _t y_{(1,k)}^{n_k-1}&\dots & \sum_t\  _tx_{(k,k'+1)}^0\cdot\ _t y_{(k,k)}^{n_k-1}\\
    \vdots& \ddots & \vdots\\
    \sum_t\  _tx_{(k,k)}^0\cdot\ _t y_{(1,k)}^{n_k-1} &\dots & \sum_t\  _tx_{(k,k)}^0\cdot\ _t y_{(k,k)}^{n_k-1}
\end{bmatrix}$$
\begin{flushright}
    $=\begin{bmatrix}
        \   \alpha^1& &0\\
        &\ddots&\\
        0& &   \alpha^k
    \end{bmatrix}$.
\end{flushright}

Note that the scalars $\alpha^1\dots \alpha^k$ are arbitrary.
Thus, there is a contradiction here when comparing the ranks of the two sides of the above equation.
Therefore,  the assumption is incorrect. 
That is,
$S_{\sum_i n_i}(J_{n_1}(\lambda)\bigoplus\dots \bigoplus   J_{n_k}(\lambda) ,\lr)/\lr$ is a Frobenius extension  only if $n_1=\dots=n_k$.

Now, we turn to the proof of  (2).

If $n_i=1$ for any $i$, then $\tela{J}_l(\tela{\lambda}_l) = \tela{\lambda}_l I$, where $I$ is the identity matrix. 
This implies that $ S_m(\tela{J_l},\lr) = M_m(\lr)$. 
By Lemma \ref{RI,Mn}, it follows that $ S_m(\tela{J},\lr)/\lr$ is a separable Frobenius extension.
On the other hand, if there exists some $\tela{n}_l>1$,  then $S_{\tela{n}_l}(J_{\tela{n}_l},\lr)/\lr$ is not a separable Frobenius extension.
Consequently, by Lemma \ref{inverse}, this implies that $M_m(S_{\tela{n}_l}(J_{\tela{n}_l},\lr))/\lr$ is not a separable Frobenius extension.
Therefore, by Lemma \ref{chai}, it follows that $S_n(J,R)/R$ is not a separable Frobenius extension.
\endd
\vspace{10pt}

\section{The proof of Corollary \ref{coro0}}\label{2sec}

Xi and Zhang claim in \cite{MR4241258} that for a matrix $a\in M_n(\lr)$, which is similar to a matrix $c\in M_n(\lr)$,  the extension $M_n(\lr)/S_n(c, \lr)$ is a separable Frobenius extension if and only if $ M_n(\lr)/S_n(a, \lr)$ is also a separable Frobenius extension.
There are also   similar results for the extension $S_n(c,\lr)/\lr$, as described below.

\begin{lemm}\label{similar}
Let $\lr$ be a commutative algebra, $c,a\in M_n(R)$ and $u\in GL_n(R)$ with $a = u^{-1}cu$.
Then $S_n(c,\lr)/\lr$ is a (separable) Frobenius extension if and only if $S_n(a,\lr)/ \lr$ is a (separable) Frobenius extension.
\end{lemm}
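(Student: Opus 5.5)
The plan is to transport a Frobenius system along the conjugation isomorphism. By the earlier lemma from \cite{MR4241258}, $S_n(a,\lr)=u^{-1}S_n(c,\lr)u$. So the map
$$\varphi: S_n(c,\lr)\to S_n(a,\lr),\qquad b\mapsto u^{-1}bu,$$
is a bijection. It is a ring isomorphism, since $u^{-1}bb'u=(u^{-1}bu)(u^{-1}b'u)$ and $\varphi(I)=I$. The one point needing care is how $\lr$ sits inside both algebras: as scalar matrices $rI$. Because $\lr$ is commutative, $rI$ is central in $M_n(\lr)$, so $\varphi(rI)=u^{-1}(rI)u=rI$. Hence $\varphi$ restricts to the identity on $\lr$, and it is an isomorphism of $\lr$-$\lr$-bimodules as well as of rings. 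Equivalently, for $r\in\lr$ and $b\in S_n(c,\lr)$ we have $\varphi(rb)=r\varphi(b)$ and $\varphi(br)=\varphi(b)r$. This is exactly where commutativity is used: otherwise $u^{-1}(rI)u$ need not equal $rI$, and $\varphi$ would not be compatible with the extension structure.

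Next, suppose $(E,X_i,Y_i)$ is a Frobenius system for $S_n(c,\lr)/\lr$. Define
$$E'=E\circ\varphi^{-1},\qquad X_i'=\varphi(X_i),\qquad Y_i'=\varphi(Y_i).$$
Then $E'\in\rmm{Hom}_{\lr-\lr}(S_n(a,\lr),\lr)$, because $\varphi^{-1}$ is an $\lr$-$\lr$-bimodule map. For $a'\in S_n(a,\lr)$, write $a'=\varphi(b)$. Then
$$\sum_i E'(a'X_i')Y_i'=\sum_i E(bX_i)\varphi(Y_i)=\varphi\Bigl(\sum_i E(bX_i)Y_i\Bigr)=\varphi(b)=a'.$$
The second equality uses that $E(bX_i)\in\lr$ and $\varphi$ is $\lr$-linear. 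The identity $\sum_i X_i'E'(Y_i'a')=a'$ follows in the same way. So $(E',X_i',Y_i')$ is a Frobenius system for $S_n(a,\lr)/\lr$.

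For separability, take $d\in C_{S_n(c,\lr)}(\lr)$ with $\sum_i X_idY_i=1$. Set $d'=\varphi(d)$. Since $\varphi$ fixes $\lr$ pointwise, $d'$ commutes with every $r\in\lr$. Applying $\varphi$ to the relation gives $\sum_i X_i'd'Y_i'=1$.

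The converse direction is symmetric, using $c=uau^{-1}$, or equivalently $\varphi^{-1}$. I expect no real obstacle here. The only subtle step is the first one: checking that $\varphi$ fixes the copy of $\lr$, which relies on the commutativity hypothesis.
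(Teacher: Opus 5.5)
Your proposal is correct and is essentially the paper's proof: transport the Frobenius system and the separability element $d$ along conjugation by $u$. The paper's $E^a(b)=u^{-1}E^c(ubu^{-1})u$ is the same map as your $E\circ\varphi^{-1}$, since scalar matrices commute with $u$ when $R$ is commutative; packaging the verifications through the isomorphism $\varphi$, which fixes $R$ pointwise, is only a cleaner organization of the same computations.
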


\proof
It is sufficient to show that if $S_n(c,\lr)/\lr$ is a (separable) Frobenius extension, then $S_n(a,\lr)/\lr$ is also a (separable) Frobenius extension.

First, we assume that  $S_n(c,\lr)/\lr$ is a  Frobenius extension.
Then, there exists a Frobenius system $(E^c,X^c_i,Y^c_i)$.
Define
$E^a: S_n(a,\lr)\ra \lr$ via $E^a(b)  = u^{-1}E^c(ubu^{-1})u$ and $X^a_i = u^{-1}X^c_iu$, $Y^a_i = u^{-1}Y^c_iu.$
Then we claim that  $(E^a,X^a_i,Y^a_i)$ is a Frobenius system.

For any $\gamma_1,\gamma_2\in\lr$ and $b, b'\in  S_n(a,\lr)$,
\begin{align*}
	E^a(\gamma_1 b\gamma_2 ) &= u^{-1}E^c(u\gamma_1 b\gamma_2 u^{-1})u\\
	&=u^{-1}E^c(\gamma_1 ubu^{-1}\gamma_2 )u\\
	&=u^{-1}\gamma_1 E^c(ubu^{-1})\gamma_2 u\\
	&=\gamma_1 u^{-1}E^c(ubu^{-1})u\gamma_2 \\
	&=\gamma_1 E^a(b)\gamma_2;
\end{align*}
\begin{align*}
	E^a(b +b') &= u^{-1}E^c(u( b+b')u^{-1})u\\
	&=u^{-1}E^c(u(b)u^{-1}+u(b')u^{-1})u\\
	&=u^{-1}E^c(u(b)u^{-1})u+u^{-1}E^c(u(b')u^{-1})u\\
	&=E^a(b)+E^a(b');
\end{align*}
and
\begin{align*}
	\sum_i E^a(bX^a_i)Y^a_i &= \sum_i u^{-1}E^c(ubX^a_iu^{-1})u\cdot u^{-1}Y^c_iu\\
	&=u^{-1}\left(\sum_i E^c(ubu^{-1}X^c_i)Y^c_i \right)u\\
	&=u^{-1}ubu^{-1}u\\
	&=b.
\end{align*}
Similarly, we have
$\sum_i X^a_iE^a(Y^a_ib) =b$.
Thus, $S_n(a,\lr)/\lr$ is a  Frobenius extension.

Finally, we assume that there exists $d\in S_n(c,\lr)$, such that $\sum X^c_i d Y^c_i =I$.
Then, 
\begin{center}
	$\sum_i X^a_i u^{-1}d u Y^a_i = \sum_i u^{-1} X^c_i u\cdot u^{-1} d u\cdot u^{-1} Y^c_i u = I.$
\end{center}
Therefore, $S_n(a,\lr)/\lr$ is a  separable  Frobenius extension.
\endd
\vspace{10pt}

The following results are well-known.

\begin{lemm}\label{0-tensor}
    Let $A$ and $B$ be algebras over a field $R$.
    For any $a\!\otimes_R\! b\in A\!\otimes_R\! B$, $a\!\otimes_R\! b =0 $ if and only if $a=0$ or $b=0$.
\end{lemm}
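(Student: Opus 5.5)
The plan is to prove the two implications separately. The easy one uses only bilinearity of the tensor product. The nontrivial one exploits that $R$ is a field, so that $A$ and $B$ are free $R$-modules and nonzero vectors can be detected by linear functionals. Only the underlying $R$-vector space structures of $A$ and $B$ matter here; the multiplications play no role.

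\emph{If $a=0$ or $b=0$, then $a\otimes_R b=0$.} Suppose $a=0$. By $R$-bilinearity of $(x,y)\mapsto x\otimes_R y$,
\[
0\otimes_R b=(0\cdot 0)\otimes_R b=0\cdot(0\otimes_R b)=0,
\]
and symmetrically $a\otimes_R 0=0$.

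\emph{If $a\neq 0$ and $b\neq 0$, then $a\otimes_R b\neq 0$.} Since $R$ is a field, the nonzero vector $a$ can be extended to an $R$-basis of $A$; for infinite-dimensional $A$ this uses Zorn's lemma, in the usual form of the existence of Hamel bases. Let $f\colon A\to R$ be the $R$-linear functional sending $a$ to $1$ and every other basis vector to $0$. In the same way, choose an $R$-linear functional $g\colon B\to R$ with $g(b)=1$.

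The map $A\times B\to R$ given by $(x,y)\mapsto f(x)g(y)$ is $R$-bilinear. By the universal property of the tensor product it therefore induces an $R$-linear map
\[
\varphi\colon A\otimes_R B\to R,\qquad \varphi(x\otimes_R y)=f(x)g(y).
\]
In particular $\varphi(a\otimes_R b)=f(a)g(b)=1\neq 0$, so $a\otimes_R b\neq 0$.

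The only step requiring any care is the existence of the functionals $f$ and $g$. This is exactly where the hypothesis that $R$ is a field is used; over a general commutative ring the statement fails, for instance because of torsion.
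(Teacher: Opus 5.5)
Your proof is correct and complete. The easy direction is bilinearity. For $a\neq 0\neq b$, the universal property gives a linear map $\varphi\colon A\otimes_R B\to R$ with $x\otimes_R y\mapsto f(x)g(y)$, and $\varphi(a\otimes_R b)=1$ shows $a\otimes_R b\neq 0$.

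The paper gives no proof of this lemma; it is simply quoted as well known, so there is no argument to compare against. The route closest to the paper's later basis computations in the proof of Lemma~\ref{tensor} would go through bases:
\begin{itemize}
\item choose bases $\{a_i\}$ of $A$ and $\{b_j\}$ of $B$;
\item use that $\{a_i\otimes_R b_j\}$ is a basis of $A\otimes_R B$;
\item note that $a\otimes_R b=\sum_{i,j}\alpha_i\beta_j\,a_i\otimes_R b_j$ vanishes iff every product $\alpha_i\beta_j$ is zero, i.e.\ iff all $\alpha_i=0$ or all $\beta_j=0$.
\end{itemize}

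Your version is more economical. It needs only one functional on each factor, not the basis theorem for tensor products, which is itself proved with exactly such coordinate functionals. It also generalizes more cleanly. Over any integral domain, the same argument shows $a\otimes b\neq 0$ whenever there are linear functionals $f$, $g$ with $f(a)\neq 0$ and $g(b)\neq 0$ (for instance, for nonzero elements of free or projective modules), because then $f(a)g(b)\neq 0$.
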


For a matrix $c$, it does not necessarily have a Jordan canonical form because its eigenvalues may lie in the algebraic closure $\overline{R}$ of $R$ and not necessarily in $R$ itself.
Therefore, the following lemma discusses the impact of field extensions on separable Frobenius extensions.

\begin{lemm}\label{tensor}
    Let $A$ be an algebra over a field $R$ and
    let $K$ be a field extension of $R$.
    Then the extension $(A\otimes_R K)/( R\otimes_R K)$ is a separable Frobenius extension if and only if the extension $A/R$ is a separable Frobenius extension.
\end{lemm}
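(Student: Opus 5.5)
We prove both directions with the Frobenius-system characterization and the separability criterion recalled in Section~\ref{pre}. We identify $R\otimes_R K$ with $K$ and write $A_K:=A\otimes_R K$. Throughout we use that $K$ is free over $R$ (any field extension has a basis), so $-\otimes_R K$ is exact and faithful. Lemma~\ref{0-tensor} is exactly this faithfulness on pure tensors.

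\emph{($\Leftarrow$)} Let $(E,X_i,Y_i)$ be a Frobenius system for $A/R$, and choose $d\in C_A(R)$ with $\sum_i X_i d Y_i=1$. Put $E_K:=E\otimes \mathrm{id}_K\colon A_K\to K$, $X_i':=X_i\otimes 1$, $Y_i':=Y_i\otimes 1$ and $d':=d\otimes 1$.
\begin{enumerate}[label=\roman*)]
\item $E_K$ is $K$-$K$-bilinear, because $K$ is commutative and central in $A_K$.
\item On pure tensors $a\otimes\kappa$, the two Frobenius identities hold: they reduce to those for $E$, multiplied by $\kappa$. Extending additively gives them on all of $A_K$.
\item Since $K$ is central, $d'$ commutes with $K$, and $\sum_i X_i'd'Y_i'=1\otimes 1$.
\end{enumerate}
Hence $A_K/K$ is a separable Frobenius extension.

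\emph{($\Rightarrow$)} This is the main obstacle, since a Frobenius system for $A_K/K$ has $K$-valued $E'$ and $X_i',Y_i'\in A_K$, none of which need be defined over $R$. We handle it by descending along a $K$-linear projection. Choose an $R$-basis $\{1\}\cup\{k_s\}$ of $K$ containing $1$, and let $\pi\colon K\to R$ be the $R$-linear coordinate function at $1$, so $\pi(1)=1$. Then $\mathrm{id}_A\otimes\pi\colon A_K\to A$ is a retraction of $A\to A_K$, $a\mapsto a\otimes1$. It is $A$-$A$-bilinear, because $A$ acts on the first tensor factor and $\pi$ acts only on the second.

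It is cleaner to avoid systems and work with the structural characterizations.
\begin{enumerate}[label=\roman*)]
\item \emph{Finite generation and projectivity.} $A_K$ is finitely generated projective over $K$, that is, finite-dimensional. Hence $A$ is finite-dimensional over $R$: an $R$-basis of $A$ tensored up stays $K$-independent by freeness of $K$.
\item \emph{Frobenius.} For finite-dimensional $A$ over a field, $A$ is Frobenius iff some $\phi\in A^*$ has no nonzero left ideal in its kernel. Equivalently, the bilinear form $(a,b)\mapsto\phi(ab)$ is nondegenerate, i.e.\ $\det$ of its Gram matrix with respect to a basis $\{a_p\}$ is nonzero. Write the Frobenius form of $A_K$ as $\phi'=\sum_p\kappa_p\,a_p^*$ with $\kappa_p\in K$. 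The Gram determinant of $\sum_p t_p a_p^*$ is a polynomial $f(t)\in R[t_1,\dots,t_N]$ whose value at $(\kappa_p)$ is nonzero, so $f\neq 0$. If $R$ is infinite, pick $t\in R^N$ with $f(t)\neq0$; this gives a Frobenius form on $A$. If $R$ is finite, use the standard fact that being Frobenius for finite-dimensional algebras descends along field extensions: $A\cong A^*$ as left modules iff $A_K\cong A_K^*$, by Noether--Deuring.
\item \emph{Separability.} Use $A_K$ separable over $K$ $\iff$ $A$ separable over $R$ via the separability idempotent. Let $e'\in A_K\otimes_K A_K\cong(A\otimes_R A)\otimes_R K$ satisfy $\mu(e')=1$ and $xe'=e'x$. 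Then $e:=(\mathrm{id}\otimes\pi)(e')$ satisfies $\mu(e)=\pi(1)=1$ and $xe=ex$ for $x\in A$, because $\mathrm{id}\otimes\pi$ is $A$-bimodule linear.
\end{enumerate}
Finally, a Frobenius extension which is separable admits $d$ with $\sum_i X_i d Y_i=1$, by the cited criterion applied to $e=\sum_i X_i\otimes d Y_i$ under the standard isomorphism $A\otimes_R A\cong\mathrm{End}_R(A)$ induced by the Frobenius system. This gives the separable Frobenius property of $A/R$. The expected sticking point is the Frobenius descent for finite $R$, where we rely on Noether--Deuring.
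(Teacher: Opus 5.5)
Your proof is correct, and its key descent device is the same as the paper's, but the overall route differs.

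\textbf{What the paper does.} The paper delegates both Frobenius statements to Xi--Zhang: Lemma 3.3(1) for the easy direction, and Proposition 3.1 for ``$A/R$ Frobenius $\iff$ $(A\otimes_R K)/K$ Frobenius''. It proves only that separability descends. It base-changes a Frobenius system $(E,X_l,Y_l)$ of $A/R$ to $(\overline{E},X_l\otimes 1,Y_l\otimes 1)$. It then takes the element $d'\in A\otimes_R K$ with $\sum_l (X_l\otimes 1)d'(Y_l\otimes 1)=1\otimes 1$ supplied by the $d$-criterion. Finally it reads off the coefficient of a basis vector $k_m$ of $K$ at which $1_K$ has nonzero coordinate $\overline{\gamma_m}$. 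That coefficient extraction is exactly your retraction $\pi$ (namely $\overline{\gamma_m}^{-1}$ times the $k_m$-coordinate), applied to $d'$ rather than to a separability idempotent.

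\textbf{What your route buys.} You apply $\mathrm{id}\otimes\pi$ to the idempotent in $(A\otimes_R A)\otimes_R K$. This shows that separability descends for arbitrary $R$-algebras, with no Frobenius system involved, and you use the $d$-criterion only once, for $A/R$ at the end. The paper's version needs to know in advance that $A/R$ is Frobenius (from the cited proposition), so that a system exists to extend.

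\textbf{What it costs.} You have to prove Frobenius descent yourself. You do this correctly via the Gram-determinant polynomial for infinite $R$ and Noether--Deuring for finite $R$. Noether--Deuring holds for arbitrary field extensions, so it already covers infinite $R$ too and the case split can be dropped. The paper's proof is shorter on the page only because it cites this part, and it also cites the easy direction that you verify directly.
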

\proof
It  suffices to show that the extension  $A/R$ is a separable Frobenius extension if $(A\otimes_R K)/( R\otimes_R K)$ is a separable Frobenius extension by \cite[Lemma 3.3(1)]{MR4461655}.
Let $\{a_i\}$ be a basis of $A$ and $\{k_j\}$ be a basis of $K$ over $R$.
According to \cite[Proposition 3.1]{MR4461655}, the extension $A/R$ is a  Frobenius extension if and only if the extension $(A\otimes_k K)/(R\otimes_k K)$ is a  Frobenius extension.
Assume that $(E,X_l,Y_l)$ is a Frobenius system of the extension $A/R$.

Define $\overline{E}(a_i\otimes_R k_j) = E(a_i)\otimes_R k_j$,  $\overline{X_i}=X_i\otimes_R 1$, and $\overline{Y_i}=Y_i\otimes_R 1$.

Next we claim that $(\overline{E},\overline{X_i},\overline{Y_i})$ is a Frobenius system of $(A\otimes_R K)/( R\otimes_R K)$.
First, $\overline{E}$ is well defined and is an $R\otimes_R K$-$R\otimes_R K$-bimodule morphism.
For any $\alpha_i a_i\otimes_R \beta_j k_j=\alpha_i' a_i\otimes_R \beta_j' k_j$ with $\alpha_i,\alpha_i',\beta_j,\beta_j'\in R$,  it follows that  $\alpha_i\beta_j = \alpha_i'\beta_j' $.
Then we have:
\begin{align*}
    \overline{E}[(\alpha_i a_i)\otimes_R (\beta_j k_j)]  &= E(\alpha_i a_i)\otimes_R (\beta_j k_j)\\
    & =[E( a_i)\alpha_i]\otimes_R (\beta_j k_j)\\
     &=E( a_i)\otimes_R (\alpha_i\beta_j k_j)\\
         &=E( a_i)\otimes_R (\alpha_i'\beta_j' k_j)\\
    &=[E( a_i)\alpha_i']\otimes_R (\beta_j' k_j)\\
    &=E( \alpha_i'a_i) \otimes_R (\beta_j' k_j)\\
    &=\overline{E}[( \alpha_i'a_i) \otimes_R (\beta_j' k_j)].
\end{align*}
Further, for any $\sum_m \gamma_m\otimes_R (\beta_m k_m),  \sum_n \gamma_n\otimes_R (\beta_n k_n)\in  R\otimes_R K$, $\sum_i\sum_j(\alpha_{(i,j)} a_i)\otimes_R (\beta_{(i,j)} k_j)\in  A\otimes_R K$ with $\gamma_m, \gamma_n, \alpha_{(i,j)}, \beta_m, \beta_n, \beta_{(i,j)}\in R$, we have:
\begin{align*}
    &\overline{E}\left( \left( \sum_m \gamma_m\otimes_R (\beta_m k_m)\right)\left( \sum_{i,j}(\alpha_{(i,j)} a_i)\otimes_R (\beta_{(i,j)} k_j)\right)\left( \sum_n \gamma_n\otimes_R (\beta_n k_n)\right)\right) \\
    =&  \overline{E}\left( \sum_{m,i,j,n}(\gamma_m\alpha_{(i,j)}  a_i\gamma_n)\otimes_R (\beta_mk_m\beta_{(i,j)} k_j \beta_n  k_n)\right) \\
    =& \sum_{m,i,j,n}\overline{E}( (\gamma_m\alpha_{(i,j)}  a_i\gamma_n)\otimes_R (\beta_mk_m\beta_{(i,j)} k_j \beta_n  k_n))\\
    = &\sum_{m,i,j,n}E(\gamma_m\alpha_{(i,j)}  a_i\gamma_n)\otimes_R (\beta_mk_m\beta_{(i,j)} k_j \beta_n  k_n)\\
    = &\sum_{m,i,j,n}\gamma_mE(\alpha_{(i,j)}  a_i)\gamma_n\otimes_R (\beta_mk_m\beta_{(i,j)} k_j \beta_n  k_n)\\
    = &\left( \sum_m\gamma_m\otimes_R (\beta_mk_m)\right)\left( \sum_i\sum_jE(\alpha_{(i,j)}  a_i)\otimes_R(\beta_{(i,j)} k_j) \right)\left( \sum_n\gamma_n\otimes_R (\beta_n  k_n)\right)\\
    = &\left( \sum_m\gamma_m\otimes_R (\beta_mk_m)\right)\left( \sum_{i,j}\overline{E}( (\alpha_{(i,j)}  a_i)\otimes_R(\beta_{(i,j)} k_j)) \right)\left( \sum_n\gamma_n\otimes_R (\beta_n  k_n)\right)\\
    = &\left( \sum_m\gamma_m\otimes_R (\beta_mk_m)\right)\left( \overline{E}\left( \sum_{i,j}(\alpha_{(i,j)}  a_i)\otimes_R(\beta_{(i,j)} k_j)\right) \right)\left( \sum_n\gamma_n\otimes_R (\beta_n  k_n)\right).
\end{align*}

Moreover, for any $ \sum_{i,j}(\alpha_{(i,j)}  a_i)\otimes_R(\beta_{(i,j)} k_j) \in A\otimes_R K$, we have
\begin{align*}
    &\sum_l \overline{E}\left(\left(\sum_{i,j}(\alpha_{(i,j)}  a_i)\otimes_R(\beta_{(i,j)} k_j)\right) (X_l\otimes_R 1)\right)(Y_l\otimes_R 1)\\
    =&\sum_l \overline{E}\left(\sum_{i,j}(\alpha_{(i,j)}  a_iX_l)\otimes_R(\beta_{(i,j)} k_j)\right)(Y_l\otimes_R 1) \\
    =&\sum_l \left(\sum_{i,j}\overline{E}((\alpha_{(i,j)}  a_iX_l)\otimes_R(\beta_{(i,j)} k_j))\right)(Y_l\otimes_R 1) \\
    =&\sum_{i,j,l}\overline{E}((\alpha_{(i,j)}  a_iX_l)\otimes_R(\beta_{(i,j)} k_j))(Y_l\otimes_R 1) \\
    =& \sum_{i,j,l}(E(\alpha_{(i,j)}  a_iX_l)\otimes_R(\beta_{(i,j)} k_j)) (Y_l\otimes_R 1) \\
    =& \sum_{i,j,l} (E(\alpha_{(i,j)}  a_iX_l)Y_l)\otimes_R(\beta_{(i,j)} k_j)  \\
    =& \sum_{i,j} \left(\sum_lE(\alpha_{(i,j)}  a_iX_l)Y_l\right)\otimes_R(\beta_{(i,j)} k_j)  \\
    =& \sum_{i,j}   (\alpha_{(i,j)}  a_i) \otimes_R(\beta_{(i,j)} k_j).
\end{align*}
Similarly, we have:
\begin{align*}
    &\sum_l (X_l\otimes_R 1)\overline{E}\left((Y_l\otimes_R 1)\left(\sum_{i,j}(\alpha_{(i,j)}  a_i)\otimes_R(\beta_{(i,j)} k_j)\right) \right)\\
    =&\sum_l (X_l\otimes_R 1)\overline{E}\left(\sum_{i,j}(Y_l\alpha_{(i,j)}  a_i)\otimes_R(\beta_{(i,j)} k_j)\right) \\
    =&\sum_l (X_l\otimes_R 1)\left(\sum_{i,j}\overline{E}((Y_l\alpha_{(i,j)}  a_i)\otimes_R(\beta_{(i,j)} k_j))\right) \\
    =&\sum_{i,j,l}(X_l\otimes_R 1)\overline{E}((Y_l\alpha_{(i,j)}  a_i)\otimes_R(\beta_{(i,j)} k_j)) \\
    =& \sum_{i,j,l}(X_l\otimes_R 1)(E(Y_l\alpha_{(i,j)}  a_i)\otimes_R(\beta_{(i,j)} k_j))  \\
    =& \sum_{i,j,l} (X_lE(Y_l\alpha_{(i,j)}  a_i))\otimes_R(\beta_{(i,j)} k_j)  \\
    =& \sum_{i,j}\left( \sum_lX_lE(Y_l\alpha_{(i,j)}  a_i)\right)\otimes_R(\beta_{(i,j)} k_j)  \\
    =& \sum_{i,j}   (\alpha_{(i,j)}  a_i) \otimes_R(\beta_{(i,j)} k_j).
\end{align*}

Since $(A\otimes_R K)/( R\otimes_R K)$ is a separable Frobenius extension, there exists an element\\ $\sum_{i,j}(\lambda_{(i,j)}a_i)\otimes_R (\gamma_{(i,j)} k_j) \in A\otimes_R K$ such that:
\begin{center}
    $\sum_l (X_l\otimes_R 1)(\sum_{i,j}(\lambda_{(i,j)}a_i)\otimes_R (\gamma_{(i,j)} k_j)](Y_l\otimes_R 1)=1\otimes_R 1$.
\end{center}
Then we have:
\begin{align*}
    1\otimes_R 1&=\sum_l (X_l\otimes_R 1)\left(\sum_{i,j}\lambda_{(i,j)}a_i\otimes_R \gamma_{(i,j)} k_j\right)(Y_l\otimes_R 1)\\
    &=\sum_l \left(\sum_{i,j}(X_l \lambda_{(i,j)}a_iY_l)\otimes_R (\gamma_{(i,j)} k_j)\right)  \\
    &=\sum_{i,j,l}(X_l \lambda_{(i,j)}a_iY_l)\otimes_R (\gamma_{(i,j)} k_j)\\
    &=\sum_{i,j}\left( \sum_l (X_l \lambda_{(i,j)}a_iY_l)\right)\otimes_R (\gamma_{(i,j)} k_j).
\end{align*}

Assume that $1_K = \sum_j \overline{\gamma_j}k_j$.
Since $1_K\neq 0$, there exists some $\overline{\gamma_m}\neq 0$.
We then consider the terms associated with \(k_m\), leading to the following equation:
\begin{align*}
    1\otimes_R \overline{\gamma_m}k_m  &=\sum_i\left( \sum_l (X_l \lambda_{(i,m)}a_iY_l)\right)\otimes_R (\gamma_{(i,m)} k_m)\\
    1\otimes_R \overline{\gamma_m}k_m  &=\sum_{i,l} X_l \lambda_{(i,m)}a_iY_l\gamma_{(i,m)}(\overline{\gamma_m})^{-1}\otimes_R \overline{\gamma_m}k_m\\
    0&=\left(\sum_{i,l} X_l \lambda_{(i,m)}\gamma_{(i,m)}(\overline{\gamma_m})^{-1}a_iY_l-1\right)\otimes_R \overline{\gamma_m}k_m. 
\end{align*}

By applying Lemma \ref{0-tensor},
we obtain the following equations:
\begin{align*}
    \sum_{i,l}  X_l \lambda_{(i,m)}\gamma_{(i,m)}(\overline{\gamma_m})^{-1}a_iY_l  &= 1_A\\
    \sum_l  X_l \left(\sum_i\lambda_{(i,m)}\gamma_{(i,m)}(\overline{\gamma_m})^{-1}a_i\right)Y_l &= 1_A.
\end{align*}
That is, there exists an element $a = \sum_i\lambda_{(i,m)}\gamma_{(i,m)}(\overline{\gamma_m})^{-1}a_i \in A$ such that $\sum_l  X_l aY_l = 1_A$.
Therefore, $A/R$ is a separable Frobenius extension.
\endd
\vspace{10pt}

\no \emph{\textbf{Proof of Corollary \ref{coro0}}}:

Suppose that $R$ is a field and $c\in M_n(R)$.
Let $\overline{R}$ denote an algebraic closure of $R$.
We consider the smallest subfield $K\subset\overline{R}$ containing $R$ and all eigenvalues of $c$; that is, $K$ is obtained by adjoining to $R$ all eigenvalues of $c$.
Consequently, $K$ is a finite extension of $R$, i.e., $\dim_R(K) < \infty$.
We may assume that $K$ is the splitting field of the characteristic polynomial of $c$, so that $c$ is similar to a Jordan-block matrix via a matrix in $GL_n(K)$.
Since $K/R$ is a field extension, we have $R \otimes_R K \cong K$.
By \cite[Proposition 3.1]{MR4461655},  \cite[Lemma 3.4]{MR4461655} and Lemma \ref{tensor}, we deduce that the extension $S_n(c,R)/R$ is (separable) Frobenius if and only if so is $S_n(c,K)/K$.
Furthermore, by Theorem \ref{main2} and Lemma \ref{similar}, we conclude that $S_n(c,K)/K$ is a Frobenius extension  if $n_i = n_j$ whenever $\lambda_i = \lambda_j$.
Moreover, $S_n(c,K)/K$ is a  separable  Frobenius extension if and only if $n_i = 1$ for every $i$.
\endd
\vspace{10pt}

Here we provide some examples of the Frobenius extension using Corollary \ref{coro0}.

\begin{exam}\label{example1}
	Let $\mr$ be the field of real numbers.
	Then we have the following:
	\begin{rlist}
		\item Let $a = \begin{bmatrix}
			 0&	0&	1\\
			0&	1&	0\\
			-1&	0&	2
		\end{bmatrix} \in M_3(\mr).$
		Then
		$S_n(c,\mr)/\mr$ is not a Frobenius extension, because the Jordan standard form of $a$ is  $\begin{bmatrix}
			1 & 1 &0\\
			0& 1&0\\
			0&0&1
		\end{bmatrix}.$
		\item Let $b = \begin{bmatrix}
			0&	0&	1\\
			0&	0&	0\\
			-1&	0&	2
		\end{bmatrix} \in M_3(\mr).$
		Then
		$S_n(b,\mr)/\mr$ is a Frobenius extension, because the Jordan standard form of $b$ is  $\begin{bmatrix}
			1 & 1 &0\\
			0& 1&0\\
			0&0&0
		\end{bmatrix}.$
		\item Let $c = \begin{bmatrix}
		0 & 1\\
		-1& 0
		\end{bmatrix} \in M_2(\mr).$
		Then
		$S_n(c,\mr)/\mr$ is a separable Frobenius extension, because the Jordan standard form of $c$ is  $\begin{bmatrix}
		i & 0\\
		0& -i
		\end{bmatrix}.$
	\end{rlist}

\end{exam}

Moreover, we obtain the following corollary.

\begin{coro}
	Let $R$ be a field of characteristic not equal to 2.
	For any matrix $c\in M_2(R)$, the extension $S_2(c,R)/\lr$ is a Frobenius extension.
\end{coro}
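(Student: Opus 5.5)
We apply Corollary \ref{coro0}(i), so it suffices to show that over $\overline{R}$ the Jordan form of any $c\in M_2(R)$ never has two blocks of different sizes with the same eigenvalue. A $2\times 2$ matrix has Jordan form, over $\overline{R}$, of one of three types:
\begin{enumerate}[label=(\alph*)]
\item $J_1(\lambda_1)\bigoplus J_1(\lambda_2)$ with $\lambda_1\neq\lambda_2$;
\item $J_1(\lambda)\bigoplus J_1(\lambda)$, i.e.\ $c=\lambda I_2$;
\item a single block $J_2(\lambda)$.
\end{enumerate}

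In case (a) there is no pair of blocks with equal eigenvalue, so the condition ``$n_i=n_j$ whenever $\lambda_i=\lambda_j$'' holds vacuously. In case (b) both blocks have size $1$. In case (c) there is only one block. In every case the hypothesis of Corollary \ref{coro0}(i) is satisfied, so $S_2(c,R)$ is a Frobenius $R$-algebra, i.e.\ $S_2(c,R)/R$ is a Frobenius extension.

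To be careful about the reduction, note how Corollary \ref{coro0} is proved. One passes to the splitting field $K$ of the characteristic polynomial of $c$, which is a finite extension of $R$. There \cite[Proposition 3.1]{MR4461655} gives that $S_2(c,R)/R$ is Frobenius if and only if $S_2(c,K)/K$ is. Then Lemma \ref{similar} and Theorem \ref{main2}(i) are applied over $K$, which is an integral domain containing the eigenvalues.

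The only point needing attention is the hypothesis $\mathrm{char}(R)\neq 2$. I expect it is not actually used. It would matter only if one wanted to diagonalize explicitly via the quadratic formula, or to use the separable-closure version of the argument. The decomposition into the three cases above holds over any algebraically closed field.

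Alternatively, one could give a direct argument avoiding field extensions. If $c$ is scalar, then $S_2(c,R)=M_2(R)$, which is Frobenius over $R$ by Lemma \ref{RI,Mn}. If $c$ is nonscalar, then $c$ is cyclic, so $S_2(c,R)=R[c]\cong R[x]/(\chi_c(x))$. A Frobenius system is then $E(a_0+a_1c)=a_1$, $X_1=1$, $Y_1=c-\operatorname{tr}(c)$, $X_2=c$, $Y_2=1$, checked as in Lemma \ref{trivial}. I would present the reduction to Corollary \ref{coro0} as the main proof and mention this direct check as confirmation that no characteristic restriction is needed. The only mild obstacle is justifying that a nonscalar $2\times 2$ matrix has centralizer $R[c]$, which follows from a direct computation with a cyclic vector.
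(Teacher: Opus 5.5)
Your proof is correct and is the paper's argument: over $\overline{R}$ a $2\times 2$ matrix has Jordan form $J_2(\lambda)$ or $J_1(\lambda_1)\bigoplus J_1(\lambda_2)$, so the block-size condition of Corollary \ref{coro0}(i) holds automatically. Your side remarks are also right: the hypothesis $\mathrm{char}(R)\neq 2$ is used nowhere, and your direct Frobenius system for $R[c]\cong R[x]/(\chi_c(x))$ (namely $E(a_0+a_1c)=a_1$ with dual bases $\{1,c\}$ and $\{c-\operatorname{tr}(c),1\}$) checks out in every characteristic.
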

\proof
For any $2 \times 2$ matrix $c$, the Jordan blocks in its Jordan canonical form (over an algebraic closure of $R$) cannot differ in size.
Therefore, by Corollary \ref{coro0}, the extension $S_2(c,R)/\lr$ is Frobenius.

\endd

\section*{Acknowledgments}

The authors would like to thank Changchang Xi for interesting discussions.
The authors are also grateful to the referees for the valuable comments and suggestions, which helped to improve the presentation of the paper.

\newpage
\bibliography{tex}

@misc{xizhang2022,
      title={New invariants of stable equivalences of algebras}, 
      author={Changchang Xi and Jinbi Zhang},
      year={2023},
      eprint={2207.10848},
      archivePrefix={arXiv},
      primaryClass={math.RT},
}

@book {1986,
	AUTHOR = {Gohberg, Israel and Lancaster, Peter and Rodman, Leiba},
	TITLE = {Invariant subspaces of matrices with applications},
	SERIES = {Classics in Applied Mathematics},
	VOLUME = {51},
	NOTE = {Reprint of the 1986 original},
	PUBLISHER = {Society for Industrial and Applied Mathematics (SIAM),
	Philadelphia, PA},
	YEAR = {2006},
	PAGES = {15--1},
	ISBN = {0-89871-608-X},
	MRCLASS = {15-01 (15-02 47A15)},
	MRNUMBER = {2228089},
	MRREVIEWER = {Edward Azoff},
}

@article {MR4461655,
	AUTHOR = {Xi, Changchang and Zhang, Jinbi},
	TITLE = {Centralizer matrix algebras and symmetric polynomials of
	partitions},
	JOURNAL = {J. Algebra},
	FJOURNAL = {Journal of Algebra},
	VOLUME = {609},
	YEAR = {2022},
	PAGES = {688--717},
	ISSN = {0021-8693},
	MRCLASS = {16W22 (05A17 15A30 16K99 16S50)},
	MRNUMBER = {4461655},
	MRREVIEWER = {Mykola Khrypchenko},
}

@article {MR3018047,
	AUTHOR = {Dolinar, Gregor and Guterman, Alexander and Kuzma, Bojan and
	Oblak, Polona},
	TITLE = {Extremal matrix centralizers},
	JOURNAL = {Linear Algebra Appl.},
	FJOURNAL = {Linear Algebra and its Applications},
	VOLUME = {438},
	YEAR = {2013},
	NUMBER = {7},
	PAGES = {2904--2910},
	ISSN = {0024-3795},
	MRCLASS = {15A27 (15A18 15A30)},
	MRNUMBER = {3018047},
	MRREVIEWER = {Olga Victorovna Markova},
}

@article {MR1401512,
	AUTHOR = {Beidar, K. I. and Fong, Y. and Stolin, A.},
	TITLE = {On {F}robenius algebras and the quantum {Y}ang-{B}axter
	equation},
	JOURNAL = {Trans. Amer. Math. Soc.},
	FJOURNAL = {Transactions of the American Mathematical Society},
	VOLUME = {349},
	YEAR = {1997},
	NUMBER = {9},
	PAGES = {3823--3836},
	ISSN = {0002-9947},
	MRCLASS = {16L60 (81R50)},
	MRNUMBER = {1401512},
	MRREVIEWER = {Zhong Qi Ma},
}

@article {MR998028,
	AUTHOR = {Datta, Lokesh and Morgera, Salvatore D.},
	TITLE = {On the reducibility of centrosymmetric matrices---applications
	in engineering problems},
	JOURNAL = {Circuits Systems Signal Process.},
	FJOURNAL = {Circuits, Systems, and Signal Processing},
	VOLUME = {8},
	YEAR = {1989},
	NUMBER = {1},
	PAGES = {71--96},
	ISSN = {0278-081X},
	MRCLASS = {15A21 (15A09 15A15 15A57 15A90)},
	MRNUMBER = {998028},
	MRREVIEWER = {Frank Uhlig},
}

@article {MR820054,
	AUTHOR = {Weaver, James R.},
	TITLE = {Centrosymmetric (cross-symmetric) matrices, their basic
	properties, eigenvalues, and eigenvectors},
	JOURNAL = {Amer. Math. Monthly},
	FJOURNAL = {American Mathematical Monthly},
	VOLUME = {92},
	YEAR = {1985},
	NUMBER = {10},
	PAGES = {711--717},
	ISSN = {0002-9890},
	MRCLASS = {15A57 (15A18 15A21)},
	MRNUMBER = {820054},
	MRREVIEWER = {John Chollet},
}

@article {MR2018787,
	AUTHOR = {Premet, Alexander},
	TITLE = {Nilpotent commuting varieties of reductive {L}ie algebras},
	JOURNAL = {Invent. Math.},
	FJOURNAL = {Inventiones Mathematicae},
	VOLUME = {154},
	YEAR = {2003},
	NUMBER = {3},
	PAGES = {653--683},
	ISSN = {0020-9910},
	MRCLASS = {20G05 (17B50)},
	MRNUMBER = {2018787},
	MRREVIEWER = {George J. McNinch},
}

@article {MR2363491,
	AUTHOR = {Panyushev, Dmitri I.},
	TITLE = {Two results on centralisers of nilpotent elements},
	JOURNAL = {J. Pure Appl. Algebra},
	FJOURNAL = {Journal of Pure and Applied Algebra},
	VOLUME = {212},
	YEAR = {2008},
	NUMBER = {4},
	PAGES = {774--779},
	ISSN = {0022-4049},
	MRCLASS = {17B20 (14L30 20G20)},
	MRNUMBER = {2363491},
	MRREVIEWER = {Simon M. Goodwin},
}

@book {MR201472,
	AUTHOR = {Suprunenko, D. A. and Ty\v{s}kevi\v{c}, R. I.},
	TITLE = {Commutative Matrices},
	PUBLISHER = {``Nauka i Tekhnika'', Minsk},
	YEAR = {1966},
	PAGES = {104},
	MRCLASS = {16.48},
	MRNUMBER = {201472},
	MRREVIEWER = {R. F. Rinehart},
}

@article {MR85214,
	AUTHOR = {Taussky, Olga},
	TITLE = {Commutativity in finite matrices},
	JOURNAL = {Amer. Math. Monthly},
	FJOURNAL = {American Mathematical Monthly},
	VOLUME = {64},
	YEAR = {1957},
	PAGES = {229--235},
	ISSN = {0002-9890},
	MRCLASS = {15.0X},
	MRNUMBER = {85214},
	MRREVIEWER = {A. Brauer},
}

@book {MR168568,
	AUTHOR = {Wedderburn, J. H. M.},
	TITLE = {Lectures on matrices},
	PUBLISHER = {Dover Publications, Inc., New York},
	YEAR = {1964},
	PAGES = {vii+200pp},
	MRCLASS = {15.00 (01.60)},
	MRNUMBER = {168568},
}

@article {xi1,
	AUTHOR = {Xi, Changchang and Yin, Shujun},
	TITLE = {Cellularity of centrosymmetric matrix algebras and {F}robenius
		extensions},
	JOURNAL = {Linear Algebra Appl.},
	FJOURNAL = {Linear Algebra and its Applications},
	VOLUME = {590},
	YEAR = {2020},
	PAGES = {317--329},
	ISSN = {0024-3795},
	MRCLASS = {16S50 (15B57 16W10 19D50)},
	MRNUMBER = {4053642},
	MRREVIEWER = {Thiago Castilho de Mello},
}

@book {newexam,
	AUTHOR = {Kadison, Lars},
	TITLE = {New examples of {F}robenius extensions},
	SERIES = {University Lecture Series},
	VOLUME = {14},
	PUBLISHER = {American Mathematical Society, Providence, RI},
	YEAR = {1999},
	PAGES = {x+84},
	ISBN = {0-8218-1962-3},
	MRCLASS = {16L60 (16H05 16W30 46L37 57M25 81T45)},
	MRNUMBER = {1690111},
	MRREVIEWER = {J\"{o}rg Feldvoss},
}

@article {MR4241258,
	AUTHOR = {Xi, Changchang and Zhang, Jinbi},
	TITLE = {Structure of centralizer matrix algebras},
	JOURNAL = {Linear Algebra Appl.},
	FJOURNAL = {Linear Algebra and its Applications},
	VOLUME = {622},
	YEAR = {2021},
	PAGES = {215--249},
	ISSN = {0024-3795},
	MRCLASS = {16S50 (15A27 15B33 16U70 16W22 20C05)},
	MRNUMBER = {4241258},
	MRREVIEWER = {A. Joseph Kennedy},
}

@article {Xi-fbfd,
	AUTHOR = {Xi, Changchang},
	TITLE = {Frobenius bimodules and flat-dominant dimensions},
	JOURNAL = {Sci. China Math.},
	FJOURNAL = {Science China. Mathematics},
	VOLUME = {64},
	YEAR = {2021},
	NUMBER = {1},
	PAGES = {33--44},
	ISSN = {1674-7283},
	MRCLASS = {16D20 (16E10 16S50 17B35 17B37 18G20)},
	MRNUMBER = {4200983},
	MRREVIEWER = {Alex S. Dugas},
}

@book {Kock-fatqft,
	AUTHOR = {Kock, Joachim},
	TITLE = {Frobenius algebras and 2{D} topological quantum field
	theories},
	SERIES = {London Mathematical Society Student Texts},
	VOLUME = {59},
	PUBLISHER = {Cambridge University Press, Cambridge},
	YEAR = {2004},
	PAGES = {xiv+240},
	ISBN = {0-521-83267-5; 0-521-54031-3},
	MRCLASS = {57R56 (16L60 18D10 53D45 81T45)},
	MRNUMBER = {2037238},
	MRREVIEWER = {Lowell Abrams},
}

@incollection {Gnilke-etbwfrfb,
	AUTHOR = {Gnilke, Oliver W. and Greferath, Marcus and Honold, Thomas and
	Wood, Jay A. and Zumbr\"{a}gel, Jens},
	TITLE = {The extension theorem for bi-invariant weights over
	{F}robenius rings and {F}robenius bimodules},
	BOOKTITLE = {Rings, modules and codes},
	SERIES = {Contemp. Math.},
	VOLUME = {727},
	PAGES = {117--129},
	PUBLISHER = {Amer. Math. Soc., [Providence], RI},
	YEAR = {[2019] \copyright 2019},
	MRCLASS = {94B05 (16P10)},
	MRNUMBER = {3938144},
	MRREVIEWER = {Edgar Mart\'{\i}nez-Moro},
}

@article {Kanzaki-ngffe,
	AUTHOR = {Kanzaki, Teruo},
	TITLE = {A note on {W}itt groups for a {F}robenius extension with
	involution},
	JOURNAL = {J. Pure Appl. Algebra},
	FJOURNAL = {Journal of Pure and Applied Algebra},
	VOLUME = {22},
	YEAR = {1981},
	NUMBER = {3},
	PAGES = {249--252},
	ISSN = {0022-4049},
	MRCLASS = {15A63 (16A36 16A42)},
	MRNUMBER = {629333},
	MRREVIEWER = {Alex Rosenberg},
}

\vspace{4mm}

\end{document}